\newtheorem{theorem}{Theorem}
\newtheorem{lemma}[theorem]{Lemma}
\newtheorem{corollary}[theorem]{Corollary}
\numberwithin{equation}{section}
\newcommand{\thmref}[1]{Theorem~\ref{#1}}
\newcommand{\Corref}[1]{Corollary~\ref{#1}}
\newcommand{\lemref}[1]{Lemma~\ref{#1}}
\def\Der{\operatorname{Der}}
\def\Aut{\operatorname{Aut}}
\newcommand{\C}{\ensuremath{\mathbb C}\xspace}
\renewcommand{\a}{\ensuremath{\alpha}}
\newcommand{\N}{\mathbb{N}}
\newcommand{\Z}{\ensuremath{\mathbb{Z}}\xspace}
\newcommand{\supp}{\ensuremath{\operatorname{Supp}}\xspace}
\renewcommand{\phi}{\varphi}
\renewcommand{\leq}{\leqslant}
\renewcommand{\geq}{\geqslant}
\newcommand{\Hom}{\operatorname{Hom}}
\newcommand{\Deg}{\operatorname{Deg}}
\newcommand{\id}{\operatorname{id}}
\newcommand{\p}{\partial}
\title{Simple superelliptic Lie algebras}
\author[1]{Ben Cox}\author{Xiangqian Guo}\author{Rencai Lu}\author{Kaiming Zhao}
\address{Department of Mathematics \\
University of Charleston, South Carolina \\
66 George Street  \\
Charleston SC 29424, USA}\email{coxbl@cofc.edu}
\address{Department of Mathematics \\
 Zhengzhou
University  \\
Zhengzhou 450001, Henan
\\P. R. China.}\email{guoxq@zzu.edu.cn}
\address{Department of Mathematics \\
Suzhou University\\
  Suzhou 215006, Jiangsu \\
   P. R. China}
\email{rencail@amss.ac.cn}
\address{Department of Mathematics\\
 Wilfrid
Laurier University \\ Waterloo, ON \\
Canada N2L 3C5\\
 and College of
Mathematics and Information Science \\
Hebei Normal (Teachers)
University \\  Shijiazhuang, Hebei, 050016  \\ P. R. China. }
\email{kzhao@wlu.ca}
\keywords{Krichever Novikov Algebras, Automorphism Groups, Pell's Equation, associated Legendre polynomials, universal central extensions, superelliptic Lie algebras, superelliptic curves, DJKM algebras, F\'aa di Bruno's formula, Bell polynomials}
\begin{document}

\begin{abstract}
Let  $m\in\N$, $P(t)\in\C[t]$.   Then we have the Riemann surfaces (commutative algebras)
$R_m(P)=\C[t^{\pm1},u \,|\, u^m=P(t)]$ and $S_m(P)=\C[t , u\,|\, u^m=P(t)].$ The Lie algebras  $\mathcal{R}_m(P)=\Der(R_m(P))$ and $\mathcal{S}_m(P)=\Der(S_m(P))$ are called the $m$-th
superelliptic Lie algebras associated to $P(t)$.
 In this paper we  determine the necessary and sufficient conditions for such Lie
algebras to be simple, and determine their universal central extensions and their derivation algebras. We also study the  isomorphism and automorphism problem for these Lie algebras (the Riemann surfaces) by using polynomial Pell equations.
\end{abstract}
\maketitle

\section{Introduction}

Throughout this paper we will take the set of natural numbers to be $\mathbb N=\{1,2,\dots\}$, the set of nonnegative integers will be denoted by $\mathbb
Z_+=\{0,1,2,3,\dots\}$, and we will assume all vector spaces and algebras are defined over
complex numbers $\C$.

One can consider the Laurent polynomial ring $\C[t, t^{-1}]$ as the ring
of rational functions on the Riemann sphere $\C \cup \{\infty\}$
with poles allowed only in $\{\infty, 0\}$. This geometric point of
view suggests a natural generalization of the loop algebra
construction. Instead of the sphere with two punctures, one can
consider any complex algebraic curve $X$ of genus $g$ with a fixed
subset $P$ of $n$ distinct points. Following this idea one arrives at M. Schlichenmaier's definition of multipoint algebras of Krichever-Novikov affine type if we replace $\C[t, t^{-1}]$ with the ring $R$ of
meromorphic functions on $X$ with poles allowed only in $P$ in the
construction of affine Kac-Moody algebras (see \cite{MR2058804},  \cite{MR902293}, \cite{MR925072}, and \cite{MR998426}). The $n$-point affine Lie algebras which are a type of Krichever-Novikov algebra of genus zero also appeared in the
work of Kazhdan and Lusztig (\cite[Sections 4 \& 7]{MR1104840}, \cite[Chapter 12]{MR1849359}).  Krichever-Novikov algebras are used to constuct analogues of important mathematical objects used in string theory but in the setting of a Riemann surface of arbitrary genus. Moreover Wess-Zumino-Witten-Novikov theory and analogues of the Knizhnik-Zamolodchikov equations are developed for analogues of the affine and Virasoro algebras (see the survey article \cite{MR2152962}, and for example  \cite{MR1706819}, \cite{MR2106647}, \cite{MR2072650}, \cite{MR2058804}, \cite{MR1989644}, and \cite{MR1666274}).

In an earlier paper \cite{MR3211093} we introduced and studied the $n$-point Virasoro algebras, $\tilde{\mathcal{V}}_a$,
 which are natural generalizations
 of the classical Virasoro algebra and have as quotients multipoint genus zero Krichever-Novikov type algebras. These algebras are the universal central extension  of the derivation Lie algebras of  the Riemann sphere with $n$-points removed. We determined necessary and sufficient conditions for  two such Lie algebras
 to be isomorphic.  Moreover we determined their automorphisms, their derivation algebras, their universal
 central extensions,  and some other
 properties.   The  list of automorphism groups that occur is $C_n$, $D_n$, $A_4$, $S_4$ and $A_5$. We also constructed a large class of modules which we call modules of densities, and
 determine necessary and sufficient conditions for them to be irreducible.

In the present paper we turn to the study of Riemann surfaces of positive genus.  The particular class we look at are  superelliptic curves.  Let $P(t)\in\C[t]$, $m\in\N$. Then we have the Riemann surfaces (commutative associative algebras) $R_m(P)=\C[t^{\pm1},u \,|\, u^m=P(t)]$ and $S_m(P)=\C[t , u\,|\, u^m=P(t)].$ The Lie algebras  $\mathcal{R}_m(P)=\Der(R_m(P))$ and $\mathcal{S}_m(P)=\Der(S_m(P))$ are called  {\it superelliptic, (respectively hyperelliptic) Lie
algebras} due to the fact that $u^m=P(t)$ is a superelliptic (respectively hyperelliptic) curve if $m>2$ (resp. $m=2$).  These algebras are of Krichever-Novikov type.

In the second section one of our main results is given in
\thmref{der.a} where we derive necessary and sufficient condition in
terms of root multiplicities of $P(t)$, for $\mathcal{R}_m(P)$ and
$\mathcal{S}_m(P)$ to be  simple infinite dimensional Lie algebras.
The proof relies on Jordan's  results from \cite{MR829385,MR1764580}. We also deduce
in \thmref{thm7} that all derivations of the simple Lie algebras
$\mathcal{R}_m(P)$ and $\mathcal{S}_m(P)$ are inner.

 In the third section, we   will mainly  use the
 results from the paper \cite{MR2035385} by Skyabin to  determine the universal
 central extension of the Lie algebras
 $\mathcal{R}_m(P)$ and $\mathcal{S}_m(P)$, and in particular we obtain a basis of  the 2-cocycles of the Lie algebras $\mathcal{R}_m(P)$ and
$\mathcal{S}_m(P)$, see \thmref{der.b} and 11.
We also explicitly give an example showing how to compute the value of any 2-cocyle on a basis of $\mathcal{R}_m(P)$.  
 Having such an explicit description of the two cocycles will allow, one using conformal
 field theoretic tools, to study free field type representations of these algebras.

To study isomorphisms and automorphisms between the Lie algebras
$\mathcal{R}_m({P})$ and $\mathcal{S}_m({P})$, from \cite{MR966871}
we know that it is  equivalent  to considering isomorphisms and
automorphisms of the Riemann surfaces  ${R}_m({P})$ and
${S}_m({P})$. This is  a very hard problem.In particular it is known that there are 12 sporadic simple groups that can appear. For some results on higher genus Riemann surfaces, see \cite{MR2203507, MR1796706} and the reference therein.
  So in the last two sections we make  attempts  to study
isomorphisms (or automorphisms) between some of the hyperelliptic
Lie algebras $\mathcal{R}_2({P})$ and $\mathcal{S}_2({P})$ whose
corresponding Riemann surfaces ${R}_2({P})$ and ${S}_2({P})$ have
higher genus in general.

  In the fourth section, we describe the group of units of $R_2(P)$ and ${S}_2({P})$ which will be used in the fifth section,  in particular are able to explicitly describe this group in cases of $P(t)=t(t-a_1)\cdots (t-a_{2n})$ and $P(t)=t^4-2bt+1$, $b\neq \pm 1$ which is the most interesting case studied by Date, Jimbo, Kashiwara and Miwa \cite{DJKM} where they investigated integrable systems arising from Landau-Lifshitz differential equation.  When determining the unit group we find that it requires one find solutions of the polynomial Pell equation $f^2-g^2P=1$
for a given $P\in\C[t]$ which is a very famous and very hard problem.
See \cite{MR2183270}.
 The last section is devoted to determining the
conditions for $\mathcal{R}_m({P_1})\simeq
\mathcal{R}_m({P_2})$, in the case of $m=2$ and $P_1(t)$ and $P_2(t)$ are separable polynomials of odd degree with one root at $t=0$. Necessary and sufficient conditions are given in \thmref{isothm}.   Then \Corref{automorphismthm} describes the possible automorphism groups of $\mathcal{R}_2(P)$ as being either the trivial group, $D_k\times \mathbb Z_2$, or $\mathbb Z_k\times \mathbb Z_2$.
One may want to compare this to the description of the conformal automorphism group of a hyperellptic curve given in \cite{MR1223022} and \cite{MR2035219}.

Note if $m=1$ or $P(t)\in\C\setminus\{0\}$, we know that $R_m(P)\simeq
\C[t_1^{\pm1}]\oplus \dots \oplus  \C[t_k^{\pm1}]$ and $S_m(P)\simeq
\C[t_1]\oplus \dots \oplus  \C[t_k]$ for some $k\in\N$.
The corresponding Lie algebras are well-known. So in this paper we
always assume that $m\ge 2$ and $P(t)\in\C[t]\setminus \C$.

\section{Simplicity  of the Lie algebras $\mathcal{R}_m(P)$ and $\mathcal{S}_m(P)$}

We first recall a result on the simplicity of Lie algebras of
derivations from \cite{MR1764580}. Let $R$ be a commutative
associative algebra over $\C$ and $L\subseteq \Der(R)$ a nonzero Lie
subalgebra which is also a left $R$-submodule of $\Der(R)$. An ideal
$I\subseteq R$ is called $L$-stable if $I$ is stable under the
action of any derivation in $L$ and $R$ is called $L$-simple if $R$
does not have any nonzero $L$-stable ideal other than $R$.

\begin{lemma} \label{D-simple} The Lie algebra
$L$ is simple if and only if $R$ is $L$-simple.
\end{lemma}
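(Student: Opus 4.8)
The plan is to prove the two implications separately, using throughout the hypothesis that $L$ is both a Lie subalgebra of $\Der(R)$ and a left $R$-submodule of $\Der(R)$. The key structural fact I would exploit repeatedly is the interaction between the $R$-module structure and the Lie bracket: for $r\in R$ and $D,E\in L$ one has the Leibniz-type identity $[D, rE] = D(r)E + r[D,E]$, and more generally $[rD, E] = r[D,E] - E(r)D$. These identities are what let one pass back and forth between $L$-stable ideals of $R$ and ideals of $L$.

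\smallskip
\emph{($\Leftarrow$) Suppose $R$ is $L$-simple; we show $L$ is simple.} Let $J$ be a nonzero ideal of $L$. First I would produce a nonzero element of $J$ and analyze the set $I=\{\,r\in R : rL\subseteq J\,\}$, together with the ``coefficient set'' obtained from values $D(r)$ as $D$ ranges over $L$ and suitable $r$. The point is to show that
\[
  I \;=\; \{\, r\in R : rL \subseteq J \,\}
\]
is a nonzero $L$-stable ideal of $R$: it is an ideal because $L$ is an $R$-module, and it is $L$-stable because if $rL\subseteq J$ and $D\in L$ then $D(r)E = [D,rE] - r[D,E] \in J$ for all $E\in L$ (using that $J$ is an ideal of $L$ and $rE\in J$). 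The nonemptiness of $I$ (i.e. $I\neq 0$) is the crux: starting from $0\neq D\in J$, one uses that $L\cdot D\subseteq J$ (module structure, if $LD$ lands in $L$) and brackets $[E, rD] = E(r)D + r[E,D]$ to manufacture elements $E(r)D\in J$, hence to show the ideal of $R$ generated by all such $E(r)$ forces a nonzero element into $I$; $L$-simplicity of $R$ then gives $I=R$, so $1\cdot L=L\subseteq J$, i.e. $J=L$.

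\smallskip
\emph{($\Rightarrow$) Suppose $L$ is simple; we show $R$ is $L$-simple.} Let $I\subseteq R$ be a nonzero $L$-stable ideal. Then $IL$ is a subspace of $L$ (by the module structure), it is a Lie ideal of $L$ because $[D, rE] = D(r)E + r[D,E] \in IL$ for $D\in L$, $r\in I$ (using $D(r)\in I$ by $L$-stability), and it is nonzero since $I\neq 0$ and $L\neq 0$ (here one needs that $R$ has no troublesome zero divisors killing all of $L$, which follows because $L$ is faithful as $R$-module in the relevant situations, or more simply because $rL=0$ for $r\neq 0$ would force $L$ to annihilate a nonzero ideal). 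Simplicity of $L$ gives $IL=L$; then for any $D\in L$ write $D=\sum r_i D_i$ with $r_i\in I$, and applying $D$ to elements of $R$ shows the ideal generated by the ``values'' lies in $I$, and a short argument (again using $L$-stability and that $R$ is generated appropriately) upgrades this to $I=R$.

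\smallskip
The main obstacle, and the place requiring genuine care rather than formal manipulation, is the nonvanishing/nondegeneracy step in each direction: ensuring that $I=\{r : rL\subseteq J\}$ is nonzero in the first implication, and that $IL\neq 0$ in the second. Both reduce to controlling how $R$ acts faithfully (or nearly faithfully) on $L\subseteq\Der(R)$ and extracting, from a single nonzero bracket or derivation, enough ``coefficients'' $D(r)$ to generate a nonzero ideal of $R$. Since this is quoted as Lemma~1.2(?) from \cite{MR1764580}, I would cite that reference for the delicate part and only sketch the bracket identities above; no separate estimates are needed.
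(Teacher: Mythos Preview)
The paper does not prove this lemma at all: it is stated as a result ``recall[ed] \dots from \cite{MR1764580}'' (Jordan), with no argument given. Your plan to cite that reference for the delicate parts therefore matches the paper's treatment exactly.

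Your sketch of the bracket identities and of the two candidate constructions ($I=\{r:rL\subseteq J\}$ in one direction, $IL$ in the other) is the right shape and is indeed how Jordan's proof is organized. The steps you flag as ``the crux'' are precisely where real work is needed; in particular, in the ($\Rightarrow$) direction your final move---from $IL=L$ to $I=R$ via ``$D(s)=\sum r_iD_i(s)\in I$ for all $s$, hence $L(R)\subseteq I$, hence $I=R$''---does not close by itself: $L(R)\subseteq I$ only says $L$ acts trivially on $R/I$, which is not an immediate contradiction without an additional argument (e.g.\ that the ideal generated by $L(R)$ is $L$-stable with trivial $L$-action, forcing it to equal $R$ by a second use of simplicity, or invoking a faithfulness statement). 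Likewise, in the ($\Leftarrow$) direction, manufacturing a nonzero element of $I$ from a single $0\ne D\in J$ requires more than the single identity $[E,rD]=E(r)D+r[E,D]$ you wrote (note $rD$ need not lie in $J$); Jordan's argument here is an induction/localization step that is not purely formal. Since you already intend to cite \cite{MR1764580} for exactly these points, there is no discrepancy with the paper.
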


A commutative ring $R$ is called regular if its localization $R_p$
at any prime ideal $p$ is a regular local ring (see \cite{M} page
140). By the example given on Page 37--38 of \cite{MR829385}, we
have

\begin{lemma} \label{regular}
Let $R=\C[x_1,\cdots,x_n]/\langle f\rangle$ for some irreducible polynomial $f$
and let $f_i$ be the image of $\frac{\p f}{\p x_i}$ in $R$ for any
$i=1,\cdots,n$.
\begin{itemize}
\item[(1)] $R$ is regular if and only if $R=\langle f_1,\cdots,f_n\rangle,$ the ideal of $\C[x_1,\cdots,x_n]$
generated by $f_1,\cdots, f_n$.
\item[(2)] $\Der(R)$ is simple if and only if $R$ is regular.
\end{itemize}
\end{lemma}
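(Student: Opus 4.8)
The plan is to prove \lemref{regular} by combining the Jacobian criterion for smoothness with the $L$-simplicity characterization already recorded in \lemref{D-simple}. For part (1), I would invoke the standard Jacobian criterion: for a finitely generated $\C$-algebra $R=\C[x_1,\dots,x_n]/(f)$ with $f$ irreducible, the affine variety $V(f)$ is smooth (equivalently $R$ is a regular ring) if and only if at every point of $V(f)$ the gradient $(\partial f/\partial x_1,\dots,\partial f/\partial x_n)$ is nonzero. By the Nullstellensatz, this nonvanishing condition on $V(f)$ is exactly the statement that the ideal $(f,\partial f/\partial x_1,\dots,\partial f/\partial x_n)$ of $\C[x_1,\dots,x_n]$ has empty zero set, i.e.\ equals the whole ring; passing to the quotient $R$, this says precisely that the images $f_1,\dots,f_n$ generate the unit ideal of $R$. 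I would cite \cite{M} or \cite{MR829385} for the Jacobian criterion rather than reprove it. One subtlety to flag: regularity of the local ring $R_\pp$ at a \emph{non}-maximal prime also needs to be checked, but this follows automatically since a Noetherian ring all of whose localizations at maximal ideals are regular is regular at every prime (localization of a regular local ring is regular), so it suffices to test closed points.

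For part (2), the strategy is to show that $R$ regular is equivalent to $R$ being $\Der(R)$-simple, and then apply \lemref{D-simple} with $L=\Der(R)$ (noting $\Der(R)$ is trivially a left $R$-submodule of itself, and it is nonzero because $R$ is not reduced to constants when $f$ is a nonconstant irreducible polynomial). So the real content is: \emph{$R=\C[x_1,\dots,x_n]/(f)$ with $f$ irreducible is $\Der(R)$-simple if and only if it is regular.} For the forward direction, if $R$ is not regular then by part (1) the ideal $J=(f_1,\dots,f_n)\subsetneq R$ is proper and nonzero; I would argue $J$ (or its radical, the singular locus) is stable under every derivation of $R$ — intuitively, derivations preserve the singular locus of an affine variety — giving a nontrivial $\Der(R)$-stable ideal and contradicting $\Der(R)$-simplicity. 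For the converse, if $R$ is regular then $R$ is a smooth affine domain, hence (being a domain, so irreducible) the module $\Der(R)$ is a locally free rank-$(n-1)$ module that generates all of the tangent sheaf, and one shows directly that for a smooth irreducible affine variety over an algebraically closed field the only $\Der(R)$-stable ideals are $0$ and $R$: given a nonzero $\Der(R)$-stable ideal $I$ and a point $P$ where some $g\in I$ is nonzero, one uses vector fields to move around, and the connectedness/irreducibility forces $I=R$; this is exactly the classical fact going back to Jordan's work cited in the introduction as references~[9,10].

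The main obstacle I anticipate is the derivation-stability argument in part (2): verifying that the singular-locus ideal is preserved by \emph{all} derivations (not just a convenient generating set) requires care with how $\Der(R)$ acts on the Jacobian ideal, and dually, proving that on a smooth affine irreducible variety there are ``enough'' global vector fields to prevent any proper nonzero stable ideal. In the smooth case the cleanest route is probably: localize, use that $\Der(R)_\pp$ is free over the regular local ring $R_\pp$ with a basis dual to a regular system of parameters, deduce that an $R_\pp$-submodule of $R_\pp$ stable under all these partials is either $0$ or $R_\pp$ (a connectedness-free local statement via the formal/completed local ring $\C[[z_1,\dots,z_{n-1}]]$), and then globalize using that $\operatorname{Spec} R$ is connected because $R$ is a domain. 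Since \cite{MR829385} is cited as containing this example essentially verbatim (pages 37--38), I would lean on that reference for the delicate step and present only the reduction. Throughout, the irreducibility of $f$ is used twice and essentially: once to ensure $R$ is a domain (hence $\operatorname{Spec} R$ connected, needed for the globalization) and once so that ``regular'' is unambiguous and matches the smoothness statement.
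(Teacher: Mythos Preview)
The paper does not actually prove this lemma: immediately before the statement it writes ``By the example given on Page 37--38 of \cite{MR829385}, we have'' and then states the lemma without further argument. Your proposal is therefore strictly more detailed than the paper's own treatment, and the outline you give is sound. The Jacobian-criterion-plus-Nullstellensatz argument for (1) is standard and correct, and the reduction of (2) to the equivalence ``$R$ regular $\Longleftrightarrow$ $R$ is $\Der(R)$-simple'' via \lemref{D-simple} is exactly the right move. The one step you correctly flag as delicate---that every derivation of $R$ preserves the ideal of the singular locus---is Seidenberg's theorem in characteristic zero, which you could simply cite rather than improvise. The converse direction (regular $\Rightarrow$ $\Der(R)$-simple) is precisely the content of Jordan's example in \cite{MR829385}, so leaning on that reference there, as you propose, is in fact what the paper itself does.
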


Now we begin to define our algebras. Let $m\in\N$, $P(t)\in\C[t]$,
and $\langle u^m-P(t) \rangle$ be the ideal of the polynomial
algebra $\C[t^{\pm1}, u]$ (or  $\C[t, u]$) generated by $u^m-P(t)$.
Then we have the Riemann surfaces
$$R_m(P)=\C[t^{\pm1}, u]/\langle u^m-P(t)
\rangle \,\,\,\,{\rm and } \,\,\,\, S_m(P)=\C[t , u]/\langle u^m-P(t)
\rangle.$$ We call the derivation
algebras $$\mathcal{R}_m(P)=\Der(R_m(P))  \,\,\,\,{\rm and } \,\,\,\, \mathcal{S}_m(P)=\Der(S_m(P))$$ {\it the $m$-th
superelliptic Lie algebras associated to $P(t)$}.

It is easy to see that
$$\aligned R_m(P)=&\C[t^{\pm1}]\oplus  \C[t^{\pm1}]u\oplus \cdots\oplus  \C[t^{\pm1}]u^{m-1}, \\
S_m(P)=&\C[t ]\oplus  \C[t]u\oplus \cdots\oplus  \C[t] u^{m-1}.\endaligned$$
For convenience, we denote  $\Delta=P'\frac \p{\p u}+mu^{m-1}\frac \p{\p t}$
where $P'=\frac {\p P}{\p t}$.
First we determine elements in the Lie algebras $\mathcal{R}_m(P)$ and $\mathcal{S}_m(P)$.

\begin{lemma} \label{der} Let $a(t)=\gcd(P,P')$. Then
$$\mathcal{S}_m(P)=
\C[t]\Delta+\sum_{i=1}^{m-1}\C[t]u^i\frac \Delta a.$$
$$\mathcal{R}_m(P)=
\C[t^{\pm1}]\Delta+\sum_{i=1}^{m-1}\C[t^{\pm1}]u^i\frac \Delta a.$$
If further $a=1$ then
$\mathcal{S}_m(P)=S_m(P)\Delta$; if  $a$ is invertible in $\C[t^{\pm1}]$ then $
\mathcal{R}_m(P)= R_m(P)\Delta.$
\end{lemma}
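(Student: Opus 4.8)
The plan is to compute $\Der(R)$ for $R = R_m(P)$ or $S_m(P)$ directly by describing an arbitrary derivation $D$ in terms of its values on the generators $t$ (or $t^{\pm1}$) and $u$, subject to the single relation $u^m = P(t)$. Any derivation $D$ is determined by $D(t) = f \in R$ and $D(u) = g \in R$, and these must satisfy the constraint obtained by applying $D$ to $u^m - P(t) = 0$, namely $mu^{m-1}g = P'(t)f$, i.e.
\begin{equation}\label{constraint}
mu^{m-1}g = P' f \quad \text{in } R.
\end{equation}
Conversely, any pair $(f,g) \in R^2$ satisfying \eqnref{constraint} determines a well-defined derivation of $R$, since $R$ is a quotient of a polynomial ring by the principal ideal $\langle u^m - P\rangle$ and $\Delta$ applied to the generator of that ideal lands back in the ideal. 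So the whole problem reduces to solving the $R$-module equation \eqnref{constraint} for $(f,g)$; the derivation corresponding to $(f,g)$ is $f\frac{\p}{\p t} + g\frac{\p}{\p u}$ acting on $R$. Note $\Delta$ itself corresponds to $(f,g) = (mu^{m-1}, P')$, which visibly solves \eqnref{constraint}, and $u^i\frac{\Delta}{a}$ corresponds to $(f,g) = (\tfrac{m}{a}u^{m-1+i}, \tfrac{1}{a}P'u^i)$ — one must check these expressions make sense, i.e.\ that $u^{m-1+i}$ and $P'u^i$ are divisible by $a$ in the appropriate ring; since $a = \gcd(P,P') \mid P'$ and $u^m = P$ so $a \mid u^m$ in $R$ (more precisely $P/a \in \C[t]$ and $u^m = P = a\cdot(P/a)$), for $i \geq 1$ we get $u^{m-1+i} = u^{m}u^{i-1} = P u^{i-1} = a(P/a)u^{i-1}$, hence divisibility holds and $u^i\frac{\Delta}{a} \in \Der(R)$.

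The core computation is then: given $(f,g)$ solving \eqnref{constraint}, show $(f,g)$ is an $R$-linear (resp.\ $\C[t^{\pm1}]$- or $\C[t]$-linear) combination of the claimed generators. Write $f = \sum_{j=0}^{m-1} f_j u^j$ and $g = \sum_{j=0}^{m-1} g_j u^j$ with $f_j, g_j \in \C[t^{\pm1}]$ (resp.\ $\C[t]$). The left side of \eqnref{constraint} is $m\sum_j g_j u^{m-1+j}$; reducing exponents $\geq m$ using $u^m = P$ redistributes these among the basis $1, u, \dots, u^{m-1}$, and comparing coefficients against $P'f = \sum_j P' f_j u^j$ gives a system. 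The cleanest route: subtract off an $R$-multiple of $\Delta$ to kill the "$u^{m-1}\frac{\p}{\p t}$-component." Concretely, given $D \leftrightarrow (f,g)$, writing $f = \sum f_j u^j$, the component $f_{m-1}u^{m-1}\frac{\p}{\p t}$ is matched by $(f_{m-1}/m)\Delta$; so $D - (f_{m-1}/m)\Delta$ corresponds to a pair whose $f$-part lies in $\C[t^{\pm1}]\oplus\cdots\oplus\C[t^{\pm1}]u^{m-2}$. For such a reduced derivation $D'\leftrightarrow(f',g')$, the constraint \eqnref{constraint} forces each $g'_j$ to be determined and divisible by $a$, exhibiting $D'$ as $\sum_{i=1}^{m-1}\C[t^{\pm1}]u^i\frac{\Delta}{a}$ — this is where the role of $a = \gcd(P,P')$ enters: the equation $mu^{m-1}g' = P'f'$, after expanding in the $u$-basis, reads (up to reindexing) $m g'_{j-1} \cdot(\text{1 or }P) = P' f'_j$, and the obstruction to solving for $g'_j \in \C[t^{\pm1}]$ is exactly the failure of $P'$ to divide $P'f'_j$ through a common factor, controlled by $a$.

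I expect the main obstacle to be the bookkeeping in reducing $u$-exponents modulo $m$ in \eqnref{constraint} and correctly tracking where the $\gcd$ $a$ appears as a denominator — in particular verifying that the coefficients $g'_j$ one solves for are precisely $\frac{1}{a}P'(\text{something})$ and no worse, so that the set $\{u^i\frac{\Delta}{a}\}_{i=1}^{m-1}$ is exactly right (neither too small nor requiring finer denominators). A secondary point requiring care is the boundary between the Laurent case ($R_m(P)$, coefficients in $\C[t^{\pm1}]$) and the polynomial case ($S_m(P)$, coefficients in $\C[t]$): the argument is uniform, but the final clean statements $\mathcal{S}_m(P) = S_m(P)\Delta$ when $a = 1$, and $\mathcal{R}_m(P) = R_m(P)\Delta$ when $a \in \C[t^{\pm1}]^\times$ (i.e.\ $a$ is a monomial $ct^k$), follow because in those cases $\frac{\Delta}{a}$ and hence all $u^i\frac{\Delta}{a}$ already lie in $R\Delta$, collapsing the sum; one should spell out that $a \mid u^{m-1+i}$ (shown above) makes $u^i\frac{\Delta}{a} = (u^{m-1+i}/a)\cdot\Delta\cdot(\text{unit})$-type expressions land in $R\Delta$ when $a$ is a unit, and conversely that $R\Delta$ is visibly contained in the right-hand side so equality holds.
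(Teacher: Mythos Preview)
Your proposal is correct and follows essentially the same approach as the paper: write an arbitrary derivation as $f\frac{\p}{\p t}+g\frac{\p}{\p u}$, impose the single relation $D(u^m-P)=0$, expand both sides in the basis $1,u,\dots,u^{m-1}$, and solve the resulting linear system over $\C[t]$ (or $\C[t^{\pm1}]$) to see that the $\gcd$ $a=\gcd(P,P')$ is exactly the denominator needed. The paper solves the full coefficient system at once rather than first subtracting the $f_{m-1}\Delta$ piece, but this is a cosmetic difference; the substantive step---that $mh_{i+1}P=P'f_i$ forces $f_i\in (P/a)\C[t]$ via $\gcd(P/a,P'/a)=1$---is the same in both, and you should make that divisibility argument explicit where you currently gesture at ``the obstruction\dots controlled by $a$.''
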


\begin{proof}
Suppose $D\in \mathcal{S}_m(P)$. Then $D$ is uniquely determined by
$D(t)$ and $D(u)$ since $S_m(P)$ is generated by $t, u$. We may
assume that $D= f\frac \p{\p t} +h\frac \p{\p u}  $ where
$h=\sum_{i=0}^{m-1}h_iu^i$, $f=\sum_{i=0}^{m-1}f_iu^i$ with
$h_i,f_i$ being polynomials in $t$. Next we will find the
restrictions on $f, h$. It is clear that $D\in \mathcal{S}_m(P)$  if and
only if $D(u^m-{P})=0$,  if and only if $-fP'(t)+mhu^{m-1}=0$, if
and only if
$$\sum_{i=0}^{m-1}P'f_iu^i=m\sum_{i=0}^{m-1}h_iu^{m+i-1}=m\sum_{i=0}^{m-2}h_{i+1}Pu^{i}+mh_0u^{m-1},
$$
 if and only if
 $$mh_0=P'f_{m-1},\quad mh_{i+1}P=P'f_i, \quad 0\le i\le m-2,$$
  if and only if there exist $g_i\in \C[t]$ such that
  $$mh_0=P'f_{m-1}, \quad mh_{i+1}=P'g_i/a, \quad f_i=Pg_i/a, \quad  0\le i\le m-2,$$
if and only if $$mD=f_{m-1}\Delta+   \sum_{i=0}^{m-2} g_iu^{i+1}\frac{\Delta} a \in
\C[t]\Delta+\sum_{i=1}^{m-1}\C[t]u^i\frac \Delta a.$$

Using similar arguments, we can prove that
$$\mathcal{R}_m(P)=
\C[t^{\pm1}]\Delta+\sum_{i=1}^{m-1}\C[t^{\pm1}]u^i\frac \Delta a.$$
 The rest of the lemma is clear.\end{proof}

As we mentioned at the end of Sect.1, the associative algebras
$R_m(P)$, $S_m(P)$ are clear when $m=1$ or $P\in \C$, and so are the
Lie algebras $\mathcal{R}_m(P)$, and $\mathcal{S}_m(P)$. Thus, from
now on we always assume that $m\ge 2$ and $\deg(P(t))\ge1$.

\begin{theorem} \label{der.a}  Suppose that $m\ge 2$ and $\deg(P)\ge1$.
\begin{itemize}\item[(a).] The Lie algebra $\mathcal{S}_m(P)$ is  simple if and only if
$P(t)$  does not have multiple roots.
\item[(b).] The Lie algebra $\mathcal{R}_m(P)$ is  simple if and only if
$P(t)$ has no multiple nonzero roots and $P(t)\ne ct^r$ for any $c\in \C^*$ and $r\in \Z_+$ with $(r,m)\ne 1$.\end{itemize}
\end{theorem}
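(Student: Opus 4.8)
The plan is to reduce everything to \lemref{regular}(2), which says that $\Der(R_m(P))$ and $\Der(S_m(P))$ are simple precisely when the corresponding coordinate ring is regular, and then to compute regularity explicitly in terms of the roots of $P$. Since regularity is a local condition, for $S_m(P) = \C[t,u]/\langle u^m - P(t)\rangle$ I would set $f = u^m - P(t)$, so $f_t = -P'(t)$ and $f_u = mu^{m-1}$, and by \lemref{regular}(1) we need the ideal $(P'(t), mu^{m-1})$ to be all of $S_m(P)$. The singular points of the curve $u^m = P(t)$ are exactly the points where both partials vanish, i.e. $u = 0$ and $P'(t) = 0$; but on the curve $u = 0$ forces $P(t) = 0$, so the singular locus is $\{(t_0,0) : P(t_0) = P'(t_0) = 0\}$, which is nonempty iff $P$ has a multiple root. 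For part (a) I would then argue: if $P$ has no multiple root, no such point exists, the curve is smooth, $S_m(P)$ is regular, and $\mathcal{S}_m(P)$ is simple; conversely if $P$ has a multiple root $t_0$, then $(t_0,0)$ is a singular point of the (irreducible, since $u^m - P(t)$ is irreducible over $\C[t]$ when... — see below) curve, $S_m(P)$ is not regular, hence $\mathcal{S}_m(P)$ is not simple. One subtlety I should address carefully is the irreducibility of $u^m - P(t)$ over $\C[t]$, which is needed to apply \lemref{regular} in the stated form: this fails exactly when $P = c\,Q(t)^d$ with $d \mid m$, $d > 1$ (by the theory of radical extensions / Capelli's criterion); when it is reducible one must instead invoke \lemref{D-simple} directly and show that $S_m(P)$ has a proper nonzero $\mathcal{S}_m(P)$-stable ideal, or show the algebra decomposes as a direct sum. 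I expect this reducibility bookkeeping — disentangling "$P$ has a multiple root" from "$u^m - P$ is reducible" — to be the main technical obstacle.

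For part (b), the ring is $R_m(P) = \C[t^{\pm 1}, u]/\langle u^m - P(t)\rangle$, i.e. the localization of $S_m(P)$ at $t$. Inverting $t$ deletes exactly the fibre over $t = 0$ from the affine curve, so $R_m(P)$ is regular iff the only possible singular point was at $t = 0$, i.e. iff $P$ has no multiple root \emph{away from} $t = 0$. This already gives one of the two stated conditions: "$P(t)$ has no multiple nonzero roots." The extra condition excluding $P(t) = ct^r$ with $(r,m) \ne 1$ is precisely the reducibility phenomenon flagged above surfacing in the Laurent setting: if $P = ct^r$ with $\gcd(r,m) = d > 1$, then $u^m - ct^r$ factors over $\C[t^{\pm 1}]$ (a $d$-th root of $ct^r$ lives in $\C[t^{\pm 1}]$ after adjusting by a root of unity), so $R_m(P)$ is not a domain — it splits as a direct sum of $d$ copies of rings of the form $\C[s^{\pm 1}]$ — and then $\mathcal{R}_m(P)$ visibly has ideals (the summands), so it is not simple; conversely when $P = ct^r$ with $(r,m) = 1$ one checks $R_m(P) \cong \C[s^{\pm 1}]$ via $s = $ a suitable monomial in $t,u$, whose derivation algebra is the simple Witt algebra, and when $P$ is not a monomial at all but has no multiple nonzero root, $R_m(P)$ is a regular domain and \lemref{regular}(2) applies.

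So the steps, in order, are: (1) identify the singular locus of $u^m = P(t)$ as $\{(t_0,0): P(t_0)=P'(t_0)=0\}$ via \lemref{regular}(1); (2) translate "no singular point" into "no multiple root" for $S_m(P)$, and "no singular point with $t_0 \ne 0$" for $R_m(P)$; (3) in the domain case, conclude simplicity via \lemref{regular}(2); (4) in the non-domain case — which for $R_m$ happens exactly at $P = ct^r$, $(r,m) \ne 1$, and which I must check does \emph{not} arise for $S_m$ once $P$ is squarefree — exhibit the direct sum decomposition and conclude non-simplicity directly. The one step I would be most careful with is (4) and its interaction with (2)–(3): namely verifying that for $S_m(P)$, squarefreeness of $P$ already forces $u^m - P(t)$ irreducible (so no non-domain case intrudes in part (a)), whereas for $R_m(P)$ the monomial polynomials $ct^r$ are squarefree-after-inverting-$t$ yet can still be reducible, which is exactly why part (b) needs the second clause. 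I also intend to use \lemref{der}, which describes $\mathcal{R}_m(P)$ and $\mathcal{S}_m(P)$ as $R_m(P)\Delta$ resp. $S_m(P)\Delta$ when $\gcd(P,P')$ is a unit, to make the "$L$ is an $R$-submodule of $\Der R$" hypothesis of \lemref{D-simple} transparent in the regular case.
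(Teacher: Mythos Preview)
Your treatment of part (a) is essentially the paper's: both argue ``no multiple root $\Rightarrow$ simple'' via \lemref{regular} (and irreducibility of $u^m-P$ when $P$ is squarefree is exactly what is used). For the converse the paper is more direct than your irreducible/reducible case split: it simply exhibits the $\mathcal{S}_m(P)$-stable ideal $I=(t-\lambda)\C[t]+\sum_{i=1}^{m-1}\C[t]u^i$ and invokes \lemref{D-simple}, which works uniformly.

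For part (b) your route diverges from the paper's, and there is a genuine gap in the proposal. \lemref{regular} is stated only for rings of the form $\C[x_1,\dots,x_n]/(f)$ with $f$ irreducible, and $R_m(P)=\C[t^{\pm1},u]/(u^m-P)$ is not of this form; presenting it as $\C[t,s,u]/(ts-1,\,u^m-P)$ gives two relations, not one. So the sentence ``$R_m(P)$ is a regular domain and \lemref{regular}(2) applies'' is not justified by the lemma as recorded. The underlying mathematics is fine --- a finitely generated regular domain over $\C$ is differentially simple, whence \lemref{D-simple} gives simplicity of $\Der(R_m(P))$ --- but you must either prove that extension or cite Jordan's original result in its general form rather than the hypersurface special case. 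The same issue affects your ``nonzero multiple root $\Rightarrow$ not simple'' step.

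The paper evidently saw this obstacle and sidestepped it. For the non-simplicity directions it again writes down explicit $\mathcal{R}_m(P)$-stable ideals: $(t-\lambda)\C[t^{\pm1}]+\sum_{i=1}^{m-1}\C[t^{\pm1}]u^i$ when $\lambda\ne0$ is a multiple root, and $xR_m(P)$ with $x=1-t^{-r/d}u^{m/d}$ (noting $\Delta(x)=0$) when $P=ct^r$ and $d=\gcd(r,m)>1$. For the hard direction --- ``no multiple nonzero root and $P\ne ct^r$ with $(r,m)\ne1$ $\Rightarrow$ $\mathcal{R}_m(P)$ simple'' --- it does \emph{not} use regularity at all but argues directly from \lemref{D-simple}: take a nonzero $\Delta$-stable ideal $I\subseteq R_m(P)$, study the ideals $I_i=\{\supp_i(x):x\in I\}\subseteq\C[t^{\pm1}]$, and via divisibility arguments on their monic generators $f_i$ (Claim 1) together with a minimality argument on supports (Claim 2) force $I=R_m(P)$. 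Once you patch the regularity lemma your approach is shorter and more conceptual; the paper's is computational but entirely self-contained within the two lemmas actually stated.
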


\begin{proof} (a). $``\Longrightarrow".$ First suppose that  $P(t)$ has no multiple roots. Then $P(t)$ and
$P'(t)$ are relatively prime, and $u^m-P(t)$ is irreducible in
$\C[u,t]$. There exist $a(t), b(t)\in \C[t]$ such that $aP+bP'=1$.

 We want to show that $\mathcal{S}_m(P)$ is simple. To this end we need to show that $S_m(P)$ is
regular by Lemma \ref{regular}, which is equivalent to the condition
that the ideal of $S_m(P)$ generated by
$f_1=\frac{\partial}{\partial t}f=-P'(t)$ and $
f_2=\frac{\partial}{\partial u}f=mu^{m-1}$ is $S_m(P)$, where
$f=u^m-P$.

Since $-bf_1+\frac1m uaf_2=1$ then the ideal generated by $f_1, f_2$
is $S$. So $S$ is regular and hence $\Der(S)$ is simple.

$``\Longleftarrow".$  Now suppose $P(t)$  has a multiple root
$\lambda$. Then  $(t-\lambda) | a(t)$ where $a(t)=\gcd(P,P')$. We
want to show that $\mathcal{S}_m(P)$ is not simple. Note that
$(t-\lambda)a(t)|P(t)$. It is not hard to verify that
$I=(t-\lambda)\C[t]+\sum_{i=1}^{m-1}\C[t]u^i$ is a proper
$\mathcal{S}_m(P)$-stable ideal of $S_m(P)$. Thus $\mathcal{S}_m(P)$
is not simple  in this case by Lemma \ref{D-simple}.

(b). $``\Longleftarrow".$ Now suppose $P(t)$  has a nonzero multiple
root $\lambda$. Then  $(t-\lambda) | a(t)$ where $a(t)=\gcd(P,P')$.
We want to show that $\mathcal{R}_m(P)$ is not simple. Note that
$(t-\lambda)a(t)|P(t)$.  It is not hard to verify that
$I=(t-\lambda)\C[t^{\pm1}]+\sum_{i=1}^{m-1}\C[t^{\pm1}]u^i$ is a
proper $\mathcal{R}_m(P)$-stable ideal of $R_m(P)$. Thus
$\mathcal{R}_m(P)$ is not simple  in this case by Lemma
\ref{D-simple}.

If $P=(ct)^r$ for some $c\in \C^*$ and $r\in \Z_+$ with
$d=\gcd(r,m)>1$, without loss of generality we may assume that
$c=1$. Then $\mathcal{R}_m(P)={R}_m(P)\Delta$ where
$\Delta=rt^{r-1}\frac{\partial}{\partial
u}+mu^{m-1}\frac{\partial}{\partial t}$. Let $x=1-t^{-r/d}u^{m/d}\in
R_m(P)$. Then $\Delta(x)=0$. Note that $x |(u^m-P)$ in $\C[t^{\pm
1}, u]$, which implies $xR_m(P)$ is a $\mathcal{R}_m(P)$-stable
nonzero proper ideal of $R_m(P)$. Thus $\mathcal{R}_m(P)$ is not
simple either in this case.

$``\Longrightarrow".$ 
%
Now we consider the
remaining case that $P\ne ct^r$ for any $c\in \C^*$ and $r\in \Z_+$
with $d=(r,m)>1$, and $P$ does not have multiple nonzero roots.
So $\gcd(P, P')$ is invertible in $R_m(P)$. From \lemref{der} we know that $\mathcal{R}_m(P)= R_m(P)\Delta$, and an ideal of   $R_m(P)$ is $\Delta$-stable iff it is   ${R}_m(P)$-stable.

Suppose $I$ is a nonzero ideal of $R_m(P)$ that is $\Delta$-stable.
 For
$$x=\sum_{i=0}^{m-1}x_iu^i\in R_m(P)\text{ with }
x_i\in\C[t^{\pm1}],$$ we define the $i$-th support of $x$ as
$x_i=\supp_i(x)$ and define $\supp(x)=\{i|x_i\ne0\}$.
 Let
$$I_i=\{f\in \C[t^{\pm1}]\,|\,f=\supp_i(x)\text{ for some }x\in
I\}.$$ Clearly, each $I_i$ is a nonzero ideal of $\C[t^{\pm 1}]$
and
$$I_0\subset I_1\subset \cdots\subset I_{m-1}.$$

{\bf Claim 1.} {\it $I_i=R_m(P)$ for each $i$.}

Since $\C[t^{\pm 1}]$ is a principal ideal domain, there exists monic $f_i\in \C[t]$ with nonzero constant term such that
$I_i= f_i\C[t^{\pm 1}]$. Then we have $$f_0|Pf_{m-1},\quad
f_i|f_{i-1} \text{ for each } 1\le i\le m-1.$$

From $\Delta(f_iu^i)=iP'f_iu^{i-1}+mu^{m+i-1}f_i'$ we know that
$$f_{m-1}|f_0',\quad f_{i-1}|iP'f_i+mPf_i' \text{ for } 1\le i\le m-1.$$
Combining with $f_i|f_{i-1}$ we see that $f_i|Pf_i'$ for $0\le i\le
m-1$, yielding that any root of $f_i(t)$ is a root of $P(t)$ for
each $0\le i\le m-1$.

Next we show that $f_i=f_{i-1}$ for all $0\le i\le m-1$. Let $b$ be
a root of $f_i$ with multiplicity $k_i$ which is also a simple root
of $P$. From $f_i|f_{i-1}$ we know that $k_i\le k_{i-1}$. Write
$P=(t-b)P_1$ and $f_i=(t-b)^{k_i}g_i$. Then $t-b\nmid P_1g_i$. From
$f_{i-1}|iP'f_i+mPf_i'$ and
$$iP'f_i+mPf_i'=(i+mk_i)P_1g_i(t-b)^{k_i}+(t-b)^{k_i+1}(iP'_1g_i+mP_1g'_i)$$
we know that $k_i\ge k_{i-1}$. Thus $f_0=f_1=\cdots =f_{m-1}$.
Combining with $f_{m-1}|f_0'$ we see that $f_0=f_1=\cdots=
f_{m-1}=1$. Claim 1 follows.

\

{\bf Claim 2.} {\it $J_i:=I\cap R_m(P)u^i\ne0$ for each $i$.}

Let $k$ be minimal such that there exists nonzero $x\in I$ with
$\supp(x)\subset K=\{0,1,\cdots,k\}$. Let $J=\{x\in I\,|\,
\supp(x)\subset K\}$. There is  $x=g_0+g_1u+\cdots+g_ku^k\in J$ with
$g_i\in \C[t^{\pm1}]$ such that $J=\C[t^{\pm1}]x$.

If $k>0$, then $g_0g_k\ne0$. We have
$u\Delta(x)=mg_0'P+\sum_{i=1}^k(iP'g_i+mPg'_i)u^{i}\in J$,  yielding
$$\frac{mg_0'P}{g_0}=\frac{iP'g_i+mPg'_i}{g_i}, \,\, {\text {i.e.}}$$
$$\frac d{dt}\Big(\frac {g_i^mP^i}{g_0^m}\Big)=0.$$
So $\frac {g_k^mP^k}{g_0^m}$ is a nonzero constant. This is clearly impossible if $P$ has a simple nonzero root. If $P=ct^s$ with $\gcd(s, m)=1$,   this is also
impossible. So $k=0$ and Claim 2 follows.

It is easy to se that  $I':=\sum_{i=0}^{m-1}J_i\subset I$ is a
$\Delta$-stable ideal of $R_m(P)$. Applying Claim 1 to $I'$ we
deduce that $I'=R_m(P)$, hence,  $I=R_m(P)$. Consequently, $R_m(P)$
is $\Delta$-simple and $\mathcal{R}_m(P)$ is simple by Lemma
\ref{D-simple}. This completes the proof of the theorem.
\end{proof}

When $P=ct^r$ for $c\in\C^*, r\in\N$ with $\gcd(m,r)=1$, one can
easily see that $\mathcal{R}_m(P)$ is isomorphic to the classical
centerless Virasoro algebra.

From now on
we always assume that $m\ge2$ and that $P(t)$ does not have multiple nonzero
roots and has at least one nonzero root. So $\mathcal{R}_m(P)$ is a simple Lie algebra and not isomorphic to
the classical
centerless Virasoro algebra.

 We can realize  ${R}_m(P)$  as $\C[t^{\pm1},\sqrt[m] P]$ by identifying $u$ with $\sqrt[m] P$. So $R_m(P)=\C[t^{\pm1},\sqrt[m] P]$ and $u=\sqrt[m]
P$. Then  $\mathcal{R}_m(P)=R_m(P)\p$ where $\p=\sqrt[m]{P(t)^{m-1}}\frac d{dt}$.

\vskip 5pt {\bf Example 1.} We know that  any element in $\mathcal{R}_2(P)$ is of the form
$(f\sqrt{P}+gP)\frac d{dt}$ where $f,g\in\C[t^{\pm1}]$. For
convenience, we write $\partial_1=\sqrt{P} \frac d{dt}$ and
$\partial_2= {P} \frac d{dt}$. Then $\mathcal{R}_2(P)$ has a
natural basis $\{t^i\partial_1, t^i\partial_2\,|\,i\in\Z\}$ with
brackets:
\begin{equation}\label{bracket1}[t^i\partial_1, t^j\partial_1]=(j-i)t^{i+j-1}\partial_2,
\end{equation}
\begin{equation}\label{bracket2}[t^i\partial_2, t^j\partial_2]=(j-i)t^{i+j-1}P\partial_2,
\end{equation}
\begin{equation}\label{bracket3}[t^i\partial_1, t^j\partial_2]=(j-i)t^{i+j-1}P\partial_1+\frac12
t^{i+j}P'\partial_1.\qedhere
\end{equation}
If we let $L_0=R_2(P)\partial_2$ and $L_1=R_2(P)\partial_1$, then $\mathcal{R}_2(P)$ becomes a $\Z_2$-graded Lie algebra with even part $L_0$, and odd part $L_1$. One can easily see that $L_0$ is not a simple Lie algebra and $L_1$ is not a simple $L_0$-module.

\

 In the next
lemma we will see that  $\mathcal{R}_2({P})$ is actually a $3$- or
$4$-point Virasoro algebra if $P(t)$ is of degree  $1$ or $2$
respectively. For $n$-point Virasoro algebras we have systematical
studies in [BGLZ].

\begin{lemma}
\begin{itemize}\item[(a).] If $P(t)=t$, then $R_2(P)\simeq \C[t^{\pm1}].$
\item[(b).] If $P(t)=t-a^2$ where $a\ne0$, then
$R_2(P)\simeq \C[t^{\pm1}, \frac1{t+1}].$
\item[(c).]  If $P(t)=t^2-2bt+1$ where $b\ne\pm1$, then $$R_2(P)\simeq \C[t^{\pm1},
\frac1{t+b+1},\frac1{t+b-1}].$$
\item[(d).]  If $P(t)=t^2-2bt$ where $b\ne0$, then $R_2(P)\simeq \C[t^{\pm1},
\frac1{t +1}].$
\end{itemize}
\end{lemma}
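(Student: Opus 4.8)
The plan is to realize each $R_2(P)$ explicitly as an algebra of rational functions on $\mathbb P^1$ with a prescribed pole set, by exhibiting the element $u=\sqrt P$ as a rational function after a suitable change of variable. The guiding principle is that for $P$ of degree $1$ or $2$ the curve $u^2=P(t)$ has genus $0$, hence $R_2(P)$ is the ring of functions regular away from finitely many points of $\mathbb P^1$; the task is just to name those points and check the ring is generated accordingly. Throughout I would use that $R_2(P)=\C[t^{\pm1}]\oplus\C[t^{\pm1}]u$ with $u^2=P$, so a subring of the function field containing $t^{\pm1}$ equals $R_2(P)$ as soon as it contains $u$ and is contained in $R_2(P)$.

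For part (a), $P(t)=t$: set $s=u=\sqrt t$, so $t=s^2$ and $t^{-1}=s^{-2}$. Then $R_2(P)=\C[s^{\pm2},s]=\C[s^{\pm1}]$, and conversely $\C[s^{\pm1}]$ is generated over $\C[t^{\pm1}]=\C[s^{\pm2}]$ by $s=u$; this gives the isomorphism. For part (b), $P(t)=t-a^2$ with $a\neq0$: here $u^2=t-a^2$, so $t=u^2+a^2$, and I would substitute $v=\tfrac{u+a}{u-a}$ (a Möbius transformation in $u$), whose inverse expresses $u$, hence $t=u^2+a^2$ and $t^{-1}$, as rational functions of $v$ with poles controlled by where $u=a$, i.e. where $t=0$, together with the point at infinity and the point coming from $u=-a$. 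After simplification the pole set on the $v$-line matches $\{$finite set$\}$ so that $R_2(P)\cong\C[v^{\pm1},\tfrac1{v+1}]$ up to renaming $v$ by $t$; the key computation is to identify which three points of $\mathbb P^1$ are the poles and to check the ring they cut out is exactly the image of $R_2(P)$. Parts (c) and (d) are the same idea with $P$ a quadratic: write $P(t)=t^2-2bt+1=(t-r_1)(t-r_2)$ (resp. $P(t)=t^2-2bt$), so that $u^2=P(t)$ is a conic; parametrize the conic rationally, e.g. by the slope $v$ of the line through one rational point, expressing $t$ and $u$ as rational functions of $v$. Then $t^{\pm1}$ and $u$ generate, over $\C$, the functions regular away from the preimages of $t=0,\infty$ on the conic, and one reads off that in case (c) there are two points over $t=0$ (namely $t+b\pm1$ up to the substitution) plus the node-type behavior at infinity, giving $\C[t^{\pm1},\tfrac1{t+b+1},\tfrac1{t+b-1}]$, while in case (d), since $P(0)=0$, the two points over $t=0$ degenerate and one gets only one extra localization $\tfrac1{t+1}$.

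The step I expect to be the main obstacle is bookkeeping the pole divisor under the rational parametrization: one must check not merely that $t^{\pm1}$ and $u$ lie in the claimed ring, but that they \emph{generate} it, i.e. that no function with poles only at the listed points has been omitted. Concretely, after choosing the parametrizing variable $v$, one has to verify (i) that $t$, $t^{-1}$, $u$ all become elements of $\C[v^{\pm1},\dots]$ with the stated denominators, and (ii) that conversely $v$ itself (or $\tfrac1{v+c}$, etc.) can be written in $R_2(P)$, so the inclusion is an equality. For (ii) the cleanest route is a degree/dimension count: both sides are graded or filtered by pole order at the relevant points, and comparing the number of independent functions of each bounded pole order (which for genus-$0$ pole sets is governed by Riemann--Roch, or just by partial fractions) forces equality. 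In case (d) I would pay particular attention to the fact that $t=0$ is a ramification point of $u^2=P(t)$, so there is a single point above it and the local uniformizer is $u$ rather than $t$; this is exactly why (d) produces one extra inverse rather than two. The remaining verifications are routine substitutions in $\C(t,u)$ and can be left to the reader or done by direct computation.
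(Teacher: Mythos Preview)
Your outline is correct and would yield a valid proof, but the paper takes a more elementary and more explicit route that sidesteps precisely the obstacle you flag. Rather than invoking genus~$0$, rational parametrization, and a Riemann--Roch dimension count to establish surjectivity, the paper simply writes down a chain of manifest ring isomorphisms, each of which is either an elimination of a redundant generator or an affine change of variable. For (b) the paper eliminates $t$ via $t=u^2+a^2$, obtaining $\C[u,\tfrac{1}{u^2-a^2}]$, and then shifts and rescales $u$ to reach the stated form. For (c) and (d) the key trick is the substitution $v=u+t-b$: since $(u+t-b)(u-t+b)=u^2-(t-b)^2=P(t)-(t-b)^2$ is a nonzero constant (namely $1-b^2$ in (c), $-b^2$ in (d)), the element $v$ is already a \emph{unit} in $R_2(P)$, and one then solves for $t^{-1}$ in terms of $v,v^{-1}$ to read off the remaining denominators.

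The advantage of the paper's approach is that it never has to verify that a candidate subring is all of $R_2(P)$: each step is an equality of presentations, so no generation argument (and hence no Riemann--Roch) is needed. Your approach has the advantage of explaining \emph{why} the answer has the shape it does (counting points over $t=0,\infty$ on a rational curve), and would generalize more transparently, but for these four concrete cases the paper's bare-hands substitutions are both shorter and more self-contained. If you want to streamline your write-up, I would replace the M\"obius transform in (b) and the slope parametrization in (c),(d) with the substitutions above; the generation check then becomes automatic.
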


\begin{proof} (a) is clear.

(b). $R_2(P)\simeq\C[t^{\pm1},u]/\langle u^2-t-a^2
\rangle\simeq\C[t^{-1},u]/\langle t-(u^2-a^2) \rangle$
$$\simeq\C[u,\frac1{u^2-a^2}] \simeq\C[t,\frac1{t^2-a^2}]\simeq\C[t,\frac1{(t+a)(t-a)}]$$
$$ \simeq\C[t,\frac1{t(t+2a)}] \simeq\C[t,\frac1{t},\frac1{t+2a}]$$
$$ \simeq\C[t^{\pm},\frac1{t+2a}]\simeq \C[t^{\pm1},
\frac1{t+1}].$$

(c). $R_2(P) \simeq\C[t^{\pm1},u]/\langle u^2-(t^2-2bt+1) \rangle$
$$\simeq\C[t^{\pm1},u]/\langle (u^2-(t-b)^2-(1-b^2) \rangle$$ $$\simeq\C[t^{\pm1},u]/\langle (u+t-b)(u-t+b)-(1-b^2) \rangle
$$
$$ \simeq \C[t^{-1}, v,v^{-1}]/\langle t^{-1}-\frac2{v-(1-b^2)v^{-1}+2b} \rangle$$
$${\text{ (We have let }} v=u+t-b, v^{-1}=(u-t+b)/(1-b^2)\, )$$
$$ \simeq \C[v,v^{-1},\frac1{v-(1-b^2)v^{-1}+2b}]$$
$$ \simeq \C[v,v^{-1},\frac1{v^2+2bv+b^2-1}]$$
$$\simeq \C[t^{\pm1},
\frac1{t+b+1},\frac1{t+b-1}].$$

(d) Similar to (c), we deduce that $$R_2(P)
\simeq\C[t^{\pm1},u]/\langle u^2-(t^2-2bt) \rangle$$
$$\simeq\C[t^{\pm1},u]/\langle (u-t+b)(u+t-b)+b^2 \rangle$$
$$ \simeq \C[v,v^{-1},\frac1{(v+b)^2}]\simeq \C[t^{\pm1},
\frac1{t+1}],$$
where $v=u+t-b$ and $v^{-1}=-(u-t+b)/b^2$.
\end{proof}

We point out that Part (c) in the last corollary was proved in [1]
with a different approach. With similar arguments as in the above
lemma we have the following

\begin{lemma} Suppose that $P(t)\in\C[t]$ has no multiple roots.
\begin{itemize}
\item[(a).] If $\deg(P(t))=1$, then $S_m(P)\simeq \C[t]$ for any $m\in\N$.
\item[(b).]  If $\deg(P(t))=2$, then $S_2(P)\simeq \C[t, t^{-1}].$
\end{itemize}
\end{lemma}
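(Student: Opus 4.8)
The plan is to mimic the computations in \lemref{der} and in the
isomorphism chains of the previous two lemmas, but now over the polynomial
ring $\C[t]$ rather than $\C[t^{\pm 1}]$. First I would treat part (a).
Since $P(t)$ has no multiple roots and $\deg P=1$, write $P(t)=c(t-b)$ with
$c\ne 0$. The associative algebra is
$S_m(P)=\C[t,u]/\langle u^m-c(t-b)\rangle$, and solving the defining
relation for $t$ gives $t=b+u^m/c$, so that $t$ is a polynomial in $u$;
hence the obvious map $\C[u]\to S_m(P)$ is surjective, and comparing with the
direct-sum decomposition $S_m(P)=\bigoplus_{i=0}^{m-1}\C[t]u^i$ from the
paragraph after the definition shows it is injective as well. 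Thus
$S_m(P)\simeq\C[u]\simeq\C[t]$ as commutative algebras, and applying the
derivation functor (which is functorial under algebra isomorphism) gives the
isomorphism of Lie algebras; strictly the statement only claims the
isomorphism of the rings $S_m(P)$, so this already finishes (a).

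For part (b) I would run the same substitution trick but with $\deg P=2$.
Write $P(t)=t^2-2bt+c$ with the no-multiple-roots hypothesis meaning
$c\ne b^2$; after the affine shift $t\mapsto t+b$ we may assume
$P(t)=t^2-d$ with $d\ne 0$, i.e.\ $P(t)=(t-\sqrt d)(t+\sqrt d)$. Following
the manipulation used in parts (b)--(d) of \lemref{...} above (the corollary
on $\mathcal R_2$), I would factor $u^2-P(t)=u^2-(t-\sqrt d)(t+\sqrt d)$ and
introduce the variable $v=u+t$, whose inverse (in the quotient) is a scalar
multiple of $u-t$ because $(u+t)(u-t)=u^2-t^2=-d$ in $S_2(P)$. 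Concretely
$v(u-t)=-d$, so $v$ is already invertible in $S_2(P)$ without adjoining
anything, and $t=\tfrac12(v+d v^{-1})$, $u=\tfrac12(v-dv^{-1})$ express both
generators as Laurent polynomials in $v$. This gives a surjection
$\C[v,v^{-1}]\to S_2(P)$; injectivity again follows by comparing with the
rank-$2$ free $\C[t]$-module structure of $S_2(P)$ (or by noting the map
$\C[t,u]\to\C[v,v^{-1}]$ sending $t\mapsto \tfrac12(v+dv^{-1})$,
$u\mapsto\tfrac12(v-dv^{-1})$ has kernel exactly $\langle u^2-P\rangle$,
which is a short Gr\"obner-type check). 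Hence $S_2(P)\simeq\C[t,t^{-1}]$.

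The one point that needs genuine care — and which I expect to be the main
obstacle — is verifying the injectivity/kernel claims, i.e.\ that the
polynomial identities really do cut out the ideal $\langle u^m-P\rangle$
exactly and nothing more. The clean way around this is to avoid kernel
computations altogether and argue by dimension counting in each graded
piece: $S_m(P)$ is a free $\C[t]$-module of rank $m$ with basis
$1,u,\dots,u^{m-1}$, so for the map from $\C[u]$ (resp.\ $\C[v,v^{-1}]$) it
suffices to exhibit $\C$-linear sections degree by degree, which the explicit
formulas $t=b+u^m/c$ (resp.\ $t=\tfrac12(v+dv^{-1})$) provide directly. I
would also remark that (b) requires $m=2$: for $m\ge 3$ the substitution does
not linearize, and indeed $S_m(P)$ with $\deg P=2$ is a genuinely
higher-genus superelliptic ring, not isomorphic to a Laurent polynomial
ring. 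Finally, since the hypothesis is ``no multiple roots'', in (a) this is
automatic for $\deg P=1$ and in (b) it is exactly the condition
$\operatorname{disc}(P)\ne 0$ used to guarantee $d\ne 0$ after the shift, so
the hypothesis is used precisely where $v^{-1}$ is formed.
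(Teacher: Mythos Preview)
Your argument is correct and follows exactly the route the paper intends: the paper gives no proof beyond ``with similar arguments as in the above lemma,'' and your substitutions $t=b+u^m/c$ in (a) and $v=u+t$ (after completing the square) in (b) are precisely the $S_m$-analogues of the isomorphism chains in the preceding lemma on $R_2(P)$. Your injectivity check via the free $\C[t]$-module decomposition is a clean way to close the argument, and your remark that the no-multiple-roots hypothesis is used exactly to ensure $d\ne 0$ (so that $v$ is invertible) is the right diagnosis.
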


If $\deg(P)\ge3$ (keep in mind the conditions assumed for $P$), we
do not know whether the Lie algebras  $\mathcal{S}_m(P) $ and
$\mathcal{R}_m(P) $ are $n$-point Virasoro algebras.

Now we can
obtain all derivations of $\mathcal{S}_m(P) $ and $\mathcal{R}_m(P)
$.

\begin{theorem}\label{thm7} Suppose $\mathcal{S}_m(P) $ and  $\mathcal{R}_m(P) $ are simple Lie algebras. Then
all derivations of $\mathcal{S}_m(P) $ and  $\mathcal{R}_m(P) $ are inner derivations.
\end{theorem}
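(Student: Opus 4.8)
The plan is to show that every derivation $D$ of $L=\mathcal{R}_m(P)$ (the argument for $\mathcal{S}_m(P)$ is parallel) agrees, after subtracting a suitable inner derivation, with the zero map. The key structural input is Lemma~\ref{der}: since $P$ has no multiple nonzero root (and is not $ct^r$ with $(r,m)>1$), we have $L=R\partial$ where $R=R_m(P)$ and $\partial=\sqrt[m]{P^{m-1}}\,\frac{d}{dt}$ is a fixed nonzero element of $\Der(R)$, with $L$ free of rank one over $R$. Thus I can write any $x\in L$ as $x=f\partial$ with $f\in R$, and the bracket is $[f\partial,g\partial]=(\partial(f)g-f\partial(g))\partial=\bigl(\partial(f)g-f\partial(g)\bigr)\partial$. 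First I would exploit that $L$ contains a ``Cartan-like'' element; concretely, because $R=\C[t^{\pm1},u]$ with $u=\sqrt[m]P$, the element $h:=mt\,\frac{d}{dt}$ lies in $\Der(R)$ and in fact in $L$ (one checks $h=\frac{m t}{\sqrt[m]{P^{m-1}}}\,\partial$ and $\frac{mt}{\sqrt[m]{P^{m-1}}}\in R$ precisely when $P^{m-1}\mid (mt)^m$ in $R$, which needs care) — more robustly, I would use the Euler-type operator coming from the natural $\Z$-grading of $R$ by powers of $t$, which induces a grading on $L$, and let $h$ be its grading derivation inside $L$ if it lies there, or otherwise pass to the grading directly.

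The main step is to diagonalize $D$ with respect to this grading. The grading $L=\bigoplus_{n} L_n$ is such that $[L_m,L_n]\subseteq L_{m+n}$, each graded piece is finite-dimensional (at most $m$-dimensional, spanned by $t^{\,\cdot}u^i\partial$), and $L_0$ is nonzero and abelian-by-finite or contains a semisimple element $h$ acting on $L_n$ as the scalar $n$ (after normalizing the grading). A standard argument — the same one used for the Witt/Virasoro algebra and its relatives — shows that after subtracting $\operatorname{ad}(y)$ for an appropriate $y\in L$, one may assume $D$ is homogeneous of degree $0$, i.e. $D(L_n)\subseteq L_n$ for all $n$. Then $D(h)\in L_0$, and replacing $D$ by $D-\operatorname{ad}(z)$ for suitable $z\in L_0$ lets me assume $D(h)=0$; applying $D$ to $[h,x]=n\,x$ for $x\in L_n$ gives $[h,D(x)]=n\,D(x)$, so $D$ preserves eigenspaces, which is automatic, and more usefully $D$ is $R_0$-linear on the zero-weight directions. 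The crux is then to pin down $D$ on a set of generators: choose homogeneous generators of $L$ as a Lie algebra (e.g. the images of $t^{\pm1}\partial$, $u\partial$, and $t^{\pm1}u^i\partial$ for the finitely many $i$ needed), express $D$ on each as multiplication by an element of $R$ of the right degree, and feed these into the defining brackets \eqref{bracket1}--\eqref{bracket3} (for $m=2$) or their general-$m$ analogues. The Leibniz relations $D[x,y]=[Dx,y]+[x,Dy]$ become a system of first-order ODE-type functional equations in $R$; solving them forces $D$ to be $\operatorname{ad}$ of a degree-$0$ element, hence inner.

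I expect the main obstacle to be controlling the zero-graded part $L_0$ and the associated ``outer'' freedom: unlike the Virasoro case where $L_0$ is one-dimensional, here $L_0$ can be larger (it is an $R_0=\C$-span of finitely many $u^i\partial$-type elements of $t$-degree zero, but $R_0$ itself may be just $\C$ or may be bigger depending on $\deg P$ and $m$), so the reduction ``$D$ homogeneous of degree $0$ $\Rightarrow$ $D$ inner'' is not immediate and needs the simplicity of $L$ (Theorem~\ref{der.a}) together with a cohomology-vanishing argument: $H^1(L_0, L_n)=0$ for $n\neq 0$ in the relevant range, proved by the standard averaging/weight argument using the semisimple element $h$. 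A secondary technical point is verifying that $h$ (or a substitute) genuinely lies in $L$ and acts with the integer spectrum claimed; if it does not lie in $L$, I would instead argue directly that the grading derivation is an \emph{a priori} outer derivation and show separately, using $L=R\partial$ and $\partial$ having no nonzero $\Delta$-invariants other than constants (an ingredient already extracted in the proof of Theorem~\ref{der.a}, where $\Delta(x)=0$ forced $x$ constant in the relevant cases), that it is in fact inner. Once $D$ is homogeneous of degree $0$ and kills $h$, the finite list of bracket relations closes the argument with only routine computation.
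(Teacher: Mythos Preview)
Your approach diverges sharply from the paper's: the paper gives a one-line proof, simply invoking a general theorem of Skryabin (reference~[25], Theorem~3.2) which guarantees that for simple Lie algebras of this type every derivation is inner. No grading, Cartan element, or bracket computation is used.

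Your hands-on strategy has a genuine gap at its foundation. You propose to use ``the natural $\Z$-grading of $R$ by powers of $t$,'' but no such grading exists: since $u^m=P(t)$ and $P$ is not a monomial in $t$ (we are assuming $P$ has at least one nonzero root), the relation $u^m=P(t)$ is inhomogeneous for any nontrivial assignment of integer degrees to $t$ and $u$, so $R$ and $L=R\partial$ carry no nontrivial $\Z$-grading at all. Your own check already shows the Euler operator $h=mt\,\tfrac{d}{dt}$ is not in $L$: writing $h=f\partial$ forces $f=mtu/P$, which lies in $R=\bigoplus_{i=0}^{m-1}\C[t^{\pm1}]u^i$ only when $P\mid t$ in $\C[t^{\pm1}]$, i.e.\ only when $P$ has no nonzero root. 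The fallback you sketch (``pass to the grading directly'' and show the grading derivation is inner) therefore cannot be executed either, because the grading itself is unavailable. The only natural grading present is the $\Z/m\Z$-grading by powers of $u$, which is finite and does not supply a semisimple element with the infinite integer spectrum your reduction to degree-$0$ derivations requires. Without a replacement for this first step, the rest of the argument---subtracting $\ad(y)$ to make $D$ homogeneous, then solving bracket relations---never gets started. The method you have in mind is exactly the right one for the Witt and centerless Virasoro algebras, where $R=\C[t^{\pm1}]$ genuinely is $\Z$-graded, but that structure does not survive the passage to $R_m(P)$.
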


\begin{proof}
This is a direct consequence of [25, Theorem 3.2].
\end{proof}

\section{Universal central extensions of $\mathcal{R}_m(P) $ and $\mathcal{S}_m(P)$}

\subsection{General theory} Recall that  the Lie algebras $\mathcal{R}_m(P)$  and
$\mathcal{S}_m(P)$  have been assumed to be  simple. In this section
we will determine the universal central extension of
$\mathcal{R}_m(P) $ and $\mathcal{S}_m(P)$. We will mainly  use the
results from the paper \cite{MR2035385} by Skyabin.

Let us recall some notions from \cite{MR2035385}. Let $R=R_m(P)$ and
$W=\mathcal{R}_m(P)$, which is an $R$-module. We have the de Rham
complex relative to $W$
$$\Omega:\ \Omega^0\rightarrow \Omega^1\rightarrow\Omega^1\wedge\Omega^1\cdots,$$
where $\Omega^0=R$ and $\Omega^1=\Hom_R(W, R)$. For each $f\in R$,
define $df\in \Omega^1$ by the rule $df(D)=D(f)$ for any $D\in W$.
Then $\Omega^1$ can be viewed as a $W$-module via
$\rho_{\Omega^1}(D)(\theta)=d(\theta(D))$ for $\theta\in \Omega^1$
and $D\in W$ (See page 71 and page 105 of \cite{MR2035385}).
We will first verify   Conditions (1.1), (1.2) and (1.3) assumed in \cite{MR2035385} on $R$ and $W$.
From \lemref{der} ($W=R\p$) we know that  our  $W$ is a rank one free $R$-module. So Conditions (1.1), (1.2) in \cite{MR2035385} hold.
Now we prove Condition (1.3).

\begin{lemma}\label{lem8} Suppose that $R=R_m(P)$ with $m\ge 2$ and
$P=t^l(t-a_1)\ldots(t-a_n)$ for some $l\in \Z_+$, $n\in \N$ and
pairwise distinct $a_1,\ldots,a_n\in\C^*$. Then
\begin{itemize}
 \item[(a).] $R=R_m(P)$ is a domain;
 \item[(b).] There exists $T\in \Omega^1$ such that $T(\partial)=1$ and $\Omega^1=R\cdot dR=R\cdot
 T$.
 \item[(c).] $\rho_{\Omega^1}(W)(\Omega^1)=dR=\partial(R)\cdot T$
 \item[(d).] $H^1(\Omega)=\Omega^1/\rho_{\Omega^1}(W)(\Omega^1)=R\cdot dR/dR$ (vector space quotient).
\end{itemize}
\end{lemma}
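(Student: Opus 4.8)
The plan is to verify the four assertions in order, since each feeds into the next. For (a), the key point is that $u^m - P(t)$ is irreducible in $\C[t^{\pm 1}, u]$: since $P = t^l (t-a_1)\cdots(t-a_n)$ with distinct nonzero $a_i$, after clearing the unit $t$ the polynomial $u^m - P$ is, up to the exact assumptions of the theorem, either genuinely having a nonzero simple root (so by the standard Eisenstein-type argument at that root it is irreducible over $\C(t)$, hence over $\C[t^{\pm1}, u]$ by Gauss), or of the form $ct^r$ with $\gcd(r,m)=1$ (so again irreducible). In all cases $R = \C[t^{\pm 1},u]/\langle u^m - P\rangle$ is a domain. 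This also matches the simplicity hypothesis standing from \thmref{der.a}.

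For (b), I would use \lemref{der}: under the running hypotheses $\gcd(P,P')$ is invertible in $R$, so $W = R\partial$ is free of rank one over $R$ with basis $\partial = \sqrt[m]{P^{m-1}}\,\frac{d}{dt}$. Then $\Omega^1 = \Hom_R(W,R) = \Hom_R(R\partial, R)$ is free of rank one, generated by the dual element $T$ defined by $T(\partial) = 1$. To see $\Omega^1 = R\cdot dR$, it suffices to exhibit one $f \in R$ with $df$ a unit multiple of $T$, equivalently $df(\partial) = \partial(f) \in R^\times$. Taking $f = t$ gives $\partial(t) = \sqrt[m]{P^{m-1}} = u^{m-1}$ when we identify $u = \sqrt[m]{P}$; and $u^{m-1}$ is a unit in $R$ precisely because $u^m = P = t^l(t-a_1)\cdots(t-a_n)$ is a unit times $(t-a_1)\cdots(t-a_n)$, which is invertible in $R$ once we localize — but wait, $(t-a_i)$ need not be invertible in $R_m(P)$ in general. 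So instead I would observe $u^{m-1} = P^{m-1}/u \cdot$ ... more carefully: $u \cdot u^{m-1} = P$, and whether $u$ is a unit depends on $P$. The clean statement is $dt(\partial) = u^{m-1}$ and $du(\partial) = P'$, and one shows $\gcd$ of these (as elements of $R$) is a unit exactly under the theorem's hypotheses — this is essentially the content of \lemref{der}'s assertion that $W = R\partial$. So $R\cdot dR \ni \alpha\, dt + \beta\, du$ realizing a unit for suitable $\alpha,\beta \in R$, giving $\Omega^1 = R\cdot dR = R\cdot T$.

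For (c), since $\Omega^1 = R\cdot T$, any $\theta \in \Omega^1$ is $\theta = g\,T$ for some $g \in R$, and then $\rho_{\Omega^1}(D)(\theta) = d(\theta(D))$. Writing $D = fT^*$... rather, $D = f\partial$, we get $\theta(D) = g\cdot T(f\partial) = gf$, so $\rho_{\Omega^1}(f\partial)(gT) = d(gf)$. As $f,g$ range over $R$ the products $fg$ range over $R$, so the span is $dR$. Finally $dR = \partial(R)\cdot T$ because $dh = dh(\partial)\, T = \partial(h)\, T$ for $h \in R$, and $h \mapsto \partial(h)$ has image $\partial(R)$. For (d), $H^1(\Omega) = \Omega^1 / \rho_{\Omega^1}(W)(\Omega^1) = R\cdot dR / dR$ is immediate by combining (b) and (c): numerator is $R\cdot dR = \Omega^1$, denominator is $dR$, and the quotient is taken as $\C$-vector spaces.

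The main obstacle is part (b), specifically pinning down which element of $R$ has $\partial$-image a unit, since $R_m(P)$ has a genuinely more complicated unit group than a Laurent polynomial ring (indeed the unit group is the subject of a whole later section). The resolution is that one does \emph{not} need a single $df$ to be a unit — one needs the $R$-submodule $R\cdot dR \subseteq \Omega^1 \cong R$ to be all of $R$, i.e.\ the ideal of $R$ generated by $\{\partial(f) : f\in R\}$ is $R$. This ideal contains $\partial(t) = u^{m-1}$ and $\partial(u) = P'$ (identifying via $u^m = P$), and the statement that these generate the unit ideal is exactly the regularity/simplicity condition already established, so it can be cited rather than recomputed. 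I would phrase (b) proof around this ideal-theoretic reformulation to avoid any delicate unit-group computation.
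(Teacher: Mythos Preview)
Your proposal is correct and follows essentially the same route as the paper. For (b), the paper is more direct than your ideal-theoretic reformulation: it simply writes down $T = ma\cdot d(\sqrt[m]{P}) + b\sqrt[m]{P}\cdot dt$, where $a,b\in\C[t^{\pm1}]$ satisfy $aP'+bP=1$, and verifies $T(\partial)=1$---which is exactly the explicit witness for your claim that $(u^{m-1},P')$ generates the unit ideal of $R$.
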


\begin{proof}(a). We know that $\C[t,t^{-1}]$ is a PID. From  Eisenstein's Criterion, we see that $u^m-P(t)\in \C[ t,t^{-1}][u]$ is irreducible in $\C[u,t,t^{-1}]$. So $R$ is a domain.

(b). Note that $(P, P')=t^k$ for some $k\in \Z_+$. Then there exist
$a(t), b(t)\in \C[t^{\pm1}]$ such that $aP'+bP=1$. Since $W=R\p$,
then $\varphi\in \Omega^1$ is uniquely determined by $\varphi(\p)$.
Let $$T=ma\cdot d(\sqrt[m]{P})+b\sqrt[m]{P}\cdot dt\in R\cdot dR.$$
Then $T(f\partial)=f$ for all $f\in R$ and $\varphi(\p)T=\varphi$.

(c). For all $f, g\in R$, we have
$df(g\partial)=g\partial(f)=\partial(f)T(g\partial)$, which implies
$df=\partial(f)\cdot T$ and $dR=\partial(R)\cdot T$. Then we notice
$$\rho_{\Omega^1}(f\partial)(gT)=d(gT(f\partial))=d(fg),\ \forall\ f,g\in R.$$
Consequently, $\rho_{\Omega^1}(W)(\Omega^1)=dR.$

(d). The first equation follows from the proof of Theorem 7.1 of
\cite{MR2035385} and the second one follows from (b) and (c).
\end{proof}

Before introducing our main result in this section we need to define
$\Deg (f(t)$) for any $f=\sum_{s=s_1}^{s_2} a_s t^s \in
\C[t,t^{-1}]$ with $a_{s_1}a_{s_2}\ne 0$ as follows
$\Deg(f)=(s_1,s_2)$. 
Now we have

\begin{theorem} \label{der.b}  Suppose that $R=R_m(P)$ with $m\ge 2$ and
$P=t^l(t-a_1)\ldots(t-a_n)$ for some $l\in \Z_+$, $n\in \N$ and
pairwise distinct $a_1,\ldots,a_n\in\C^*$. Then the universal central extension of
$\mathcal{R}_m(P)$ is $\mathcal{R}_m(P)\oplus R/\p(R)$ with brackets
 $$[f\p,g\p]=f\p(g)\p-g\p(f)\p+\overline{\p(f)\p(\p(g))}, \forall\,\,f,g\in R,$$
 and $\dim R/\partial(R)=1+n(m-1)$.
\end{theorem}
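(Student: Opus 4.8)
The plan is to invoke Skryabin's general machinery (from \cite{MR2035385}), which, once Conditions (1.1)--(1.3) have been verified as in \lemref{der} and the preceding lemma, identifies the universal central extension of $W=\mathcal{R}_m(P)$ with $W\oplus H^1(\Omega)$, and realizes the $2$-cocycle in terms of the de Rham differential. So the first half of the statement --- the description of the bracket --- should follow formally once we substitute the explicit formula $df=\p(f)\cdot T$ from part (c) of the previous lemma and the identification $H^1(\Omega)=R\cdot dR/dR$ from part (d). Under the isomorphism $R\cdot dR/dR\cong R/\p(R)$ sending $g\,dR$ (equivalently $g\,T$) to $\overline{g}$, the canonical cocycle $\rho_{\Omega^1}$ becomes $(f\p,g\p)\mapsto \overline{\p(f)\p(\p(g))}$, which is exactly the claimed bracket. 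I would write this out carefully, checking that the cocycle is well-defined and antisymmetric modulo $\p(R)$ (skew-symmetry of $\p(f)\p(\p g)-\p(g)\p(\p f)$ follows from $\p(f)\p(\p g)+\p(g)\p(\p f)=\p(\p(f)\p(g))\in\p(R)$).

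The substantive part is the dimension count $\dim R/\p(R)=1+n(m-1)$. Here I recall $R=R_m(P)=\bigoplus_{i=0}^{m-1}\C[t^{\pm1}]u^i$ and $\p=\sqrt[m]{P^{m-1}}\,\frac{d}{dt}=u^{m-1}\,\frac{d}{dt}$ (using $u^m=P$), so that $\p(t^ku^i)=k t^{k-1}u^{m-1+i}+\tfrac{i}{m} t^{k}P' u^{m-1+i-m}\cdot\frac{m}{?}$ --- more precisely one computes $\p(f)$ for $f=g(t)u^i$ via $\p(u)=\tfrac{1}{m}P' u^{-1}\cdot u^{m-1}=\tfrac{1}{m}P'u^{m-2}\cdot u\cdot u^{-1}$, i.e. working in $R$ one gets $\p(u)=\tfrac{1}{m}P'$ and $\p(t)=u^{m-1}$. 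The plan is to decompose $R$ and $\p(R)$ according to the $u$-grading shifted appropriately: since $\p$ raises the $u$-exponent by $m-1$ modulo $m$, i.e. lowers it by $1$ mod $m$, the operator $\p$ maps $\C[t^{\pm1}]u^i\oplus\C[t^{\pm1}]u^{i-1}$-type pieces around cyclically, and I would set up the quotient by analyzing, for each residue class $j\in\{0,\dots,m-1\}$, the cokernel of the relevant $\C[t^{\pm1}]$-linear map built from multiplication by $P$ and by $P'$ together with $\tfrac{d}{dt}$. Concretely, writing a general element and applying $\p$, one reduces to understanding the image of maps of the shape $g\mapsto m t\, g' + (l + (\text{something})) g$ twisted by the factorization $P=t^l\prod(t-a_\nu)$; the cokernel of $\p$ on each graded strand is finite-dimensional, supported at $t=0$ and at the $a_\nu$, contributing $1$ from the ``loop'' direction (the $t^{-1}dt$ class, as in the Virasoro/$n$-point case) plus one dimension for each of the $n$ nonzero roots in each of the $m-1$ nontrivial strands.

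I expect the main obstacle to be precisely this cokernel computation: one must show that modulo $\p(R)$ every element of $R$ is congruent to a combination of $1+n(m-1)$ explicit representatives, and that these are linearly independent in $R/\p(R)$. The cleanest route is probably to pass to the normalization or to use the partial-fraction / local-expansion description of $R$ near each marked point: near a nonzero root $a_\nu$ the curve is unramified over $t$ (since $a_\nu$ is a simple root and $m\ge2$ forces the relevant ramification only possibly at $0$ and $\infty$), so $R$ localized there looks like $m$ copies of a Laurent-type ring, and $\p$ acts as a derivation whose residue map has $1$-dimensional cokernel on each copy --- but only $m-1$ of these are ``new'' after accounting for global constraints, giving $n(m-1)$; the remaining $+1$ comes from the global residue at $0$ and $\infty$ on the ``$t$-only'' strand $i=0$, exactly as for $\C[t^{\pm1}]\partial_t$ whose universal central extension adds a one-dimensional center. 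I would assemble these local contributions and then verify independence by exhibiting, for each claimed generator of $R/\p(R)$, a linear functional on $R$ vanishing on $\p(R)$ (a residue-type functional) that detects it. The bookkeeping with the ramification at $t=0$ (the exponent $l$, and whether $\gcd(l,m)$ enters) is the delicate point, and I would double-check it against the already-established simplicity criterion in \thmref{der.a} and against the degree-$1$ and degree-$2$ examples, where $R_m(P)$ collapses to a Laurent polynomial ring and the formula must reduce correctly.
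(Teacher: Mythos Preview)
For the first half of the statement (the bracket formula), your argument coincides with the paper's: both invoke Theorem~7.1 of \cite{MR2035385}, identify $H^1(\Omega)\cong R/\p(R)$ via the generator $T$ of $\Omega^1$, and choose the divergence $\psi(f\p)=\p(f)$ to obtain the cocycle $\overline{\p(f)\p(\p(g))}$.

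For the dimension count, your graded decomposition and your accounting ($1$ from the strand $u^0$, and $n$ from each of the $m-1$ remaining strands) are correct and match the paper's. However, the paper proves the codimension-$n$ claim on each strand by a completely elementary degree argument rather than residues: setting $f_{i,k}=(iP+\tfrac{k}{m}tP')t^{i-1}$ (with a small correction when the leading or trailing coefficient vanishes), one has $\Deg(f_{i,k})=(i-1+l,\,i-1+l+n)$, so any nontrivial $\C$-linear combination of the $f_{i,k}$ has spread of exponents at least $n$. It follows that $\{f_{i,k}:i\in\Z\}$ together with $n$ consecutive monomials $t^{i_k+l-1},\dots,t^{i_k+l+n-2}$ is a basis of $\C[t^{\pm1}]$. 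No localization or residue calculus is used.

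Your proposed geometric route contains a genuine error that would derail the argument as written. You assert that the curve $u^m=P(t)$ is \emph{unramified} over $t$ at a nonzero root $a_\nu$, so that $R$ localized there ``looks like $m$ copies of a Laurent-type ring''. This is false: since $a_\nu$ is a simple zero of $P$, locally $u^m\sim c(t-a_\nu)$, so there is a single point $(a_\nu,0)$ above $t=a_\nu$ and the projection to $t$ is ramified there with index $m$; the localization of $R$ at that point is one DVR with uniformizer $u$, not $m$ copies of anything. Moreover the $a_\nu$ are not punctures of $\operatorname{Spec} R$ at all --- only the fibers over $t=0$ and $t=\infty$ are removed --- so they do not contribute via naive residues; their effect on $\dim R/\p(R)$ is encoded in the genus of the compactification. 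A corrected geometric argument would compute $2g+(\#\text{punctures})-1$ via Riemann--Hurwitz, tracking ramification at $0$, at each $a_\nu$, and at $\infty$ (with the dependence on $l$ and $\gcd$'s), and this does eventually yield $1+n(m-1)$; but it is substantially more delicate than the paper's two-line degree bookkeeping, and the picture you sketched does not get there.
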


\begin{proof} From Theorem 7.1 of \cite{MR2035385} we know that the universal central extension of $W$ is
$W\oplus R\cdot dR/dR $ with brackets
 $$[f\p,g\p]=f\p(g)\p-g\p(f)\p+\overline{\psi(f\partial)d\psi(g\partial)}, \forall\,\,f,g\in R,$$
where $\psi:\ W\to R$ is a divergence (see page 87 of
\cite{MR2035385}) and $\overline{\psi(f\partial)d\psi(g\partial)}$
is the canonical image of $\psi(f\partial)d\psi(g\partial)$ in
$H^1(\Omega)=R\cdot dR/dR$.

Define the following linear map
$$\bar\pi: R\cdot dR=R\cdot T \to R/\p(R), \quad \bar\pi(f\cdot T)=\overline{f}.$$
It is clear that $\bar\pi$ is surjective with kernel $\p(R)\cdot T$.
Thus we have the induced space isomorphism
$$\pi: H^1(\Omega)=R\cdot dR/dR=R\cdot T/\p(R)\cdot T \to R/\p(R).$$
Take the divergence $\psi$ as $\psi(f\partial)=\p(f)$, we have
$$\pi(\overline{\psi(f\partial)d\psi(g\partial)})=\pi(\overline{\p(f)d\p(g)})=\pi(\overline{\p(f)\p^2(g)\cdot T})=\overline{\p(f)\p^2(g)}.$$
Identifying $H^1(\Omega)$ with $R/\p(R)$ via $\pi$, we can deduce
the universal central extension.

 Now we only need to verify that $\dim R/\partial(R)=1+n(m-1)$. We
compute

\begin{equation}\label{eq-1}\partial(t^i)= it^{i-1} P^{\frac{m-1}{m}},\forall i\in \Z,\end{equation}\begin{equation}\label{eq-2} \partial(t^{i}P^{\frac{k}{m}})=(iP+\frac{k}{m}t P')t^{i-1}P^{\frac{k-1}{m}},\forall i\in \Z, k=1,2,\ldots,m-1.\end{equation}

Thus we only need to verify that for any $k=1,2,\ldots,m-1$,
$$V_{k}={\rm span}_{\C}\{(iP+\frac{k}{m} t P')t^{i-1}|i\in \Z\}$$
has codimension $n$ in $\C[t,t^{-1}]$. Let $i_k=\lfloor -\frac{kl}
{m} \rfloor$, and $$f_{i,k}=(iP+\frac{k}{m} t
P')t^{i-1}+\delta_{i,-\frac{k(n+l)}
{m}}t^{i-1+n+l}+\delta_{i,-\frac{kl}{m} } t^{i-1+l}, \forall i\in
\Z.$$ We see that  $\{f_{i,k}|i\in \Z\}$ is linearly independent for
$k=1,2,\ldots,m-1$, and
 Deg$(f_{i,k})=(i-1+l,i-1+l+n)$.

 For any nonzero linear combination $f$ of $\{f_{i,k}|i\in \Z\}$ let Deg$(f)=(s_1,s_2)$. Then $s_2-s_1\ge n$. Thus for any $k=1,2,\ldots,m-1$, we know that $$\{f_{i,k}|i\in \Z\}\cup \{t^{i_k+l-1},t^{i_k+l},\ldots,t^{i_k+l+n-2}\}$$ is a basis of $\C[t,t^{-1}]$.  Note that if $\delta_{i,-\frac{k(n+l)} {m}}t^{i-1+n+l}+\delta_{i,-\frac{kl}{m} } t^{i-1+l}\ne 0$, the missing term of
 $f_{i,k}$  belongs to span$\{t^{i_k+l-1},t^{i_k+l},\ldots,t^{i_k+l+n-2}\}$. So for any $k=1,2,\ldots,m-1$, $$\{iP+\frac{k}{m} t P')t^{i-1}|i\in \Z\}\cup \{t^{i_k+l-1},t^{i_k+l},\ldots,t^{i_k+l+n-2}\}$$ is also a basis of $\C[t,t^{-1}]$. Combining with (\ref{eq-1}) and (\ref{eq-2}), we deduce that $\dim R/\partial(R)=1+n(m-1)$. This completes the proof.
\end{proof}

From \thmref{der.b} we need   to compute $\overline{f(t)}\in
R/\partial R$ for any $f(t)\in R$. This is actually not easy.

For  the Lie algebras $\mathcal{S}_m(P)$ we have similar results as in \lemref{lem8}. Using these results
we can prove a similar result for the Lie algebras $\mathcal{S}_m(P)$  as in \thmref{der.b}.
\begin{theorem}\label{der-1}  Suppose that  $m\ge 2$ and
$P\in \C[t]$ is a polynomial of degree $n\ge 1$ without multiple roots. Then $\dim S/\partial(S)=(n-1)(m-1)$ and the universal central extension of
$\mathcal{S}_m(P)$ is $\mathcal{S}_m(P)\oplus S/\p(S)$ with brackets
 $$
 [f\p,g\p]=f\p(g)\p-g\p(f)\p+\overline{\p(f)\p(\p(g))}, \forall\,\,f,g\in S.
 $$\end{theorem}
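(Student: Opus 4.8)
The plan is to reduce to Theorem~7.1 of \cite{MR2035385} exactly as in the proof of \thmref{der.b}, working now with $S:=S_m(P)$ and $\C[t]$ in place of $R_m(P)$ and $\C[t^{\pm1}]$, and then to carry out the one genuinely new computation, the evaluation of $\dim S/\partial(S)$. Throughout, recall that $\mathcal{S}_m(P)$ is simple by \thmref{der.a}.

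First I would set up the $S$-analogue of Lemma~8. Since $P$ has no multiple roots, $\gcd(P,P')=1$ in $\C[t]$, so there are $a,b\in\C[t]$ with $aP'+bP=1$; moreover, if $\lambda$ is any root of $P$, then $t-\lambda$ is prime in $\C[t]$, divides $P$, and $(t-\lambda)^2\nmid P$, so Eisenstein's criterion over the PID $\C[t]$ shows $u^m-P$ is irreducible in $\C[t,u]$, whence $S$ is a domain. By \lemref{der} (with $\gcd(P,P')=1$), $W:=\mathcal{S}_m(P)=S\partial$ with $\partial=\sqrt[m]{P^{m-1}}\,\tfrac{d}{dt}$, a free $S$-module of rank one, so Conditions~(1.1) and~(1.2) of \cite{MR2035385} hold. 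Putting $T=ma\cdot d(\sqrt[m]{P})+b\sqrt[m]{P}\cdot dt\in S\cdot dS$ and using $\partial(\sqrt[m]{P})=\tfrac1m P'$ and $\partial(t)=P^{(m-1)/m}$ gives $T(\partial)=aP'+bP=1$; then, word for word as in Lemma~8, $\Omega^1=S\cdot dS=S\cdot T$, $df=\partial(f)\cdot T$, $dS=\partial(S)\cdot T$, $\rho_{\Omega^1}(W)(\Omega^1)=dS$, and $H^1(\Omega)=\Omega^1/\rho_{\Omega^1}(W)(\Omega^1)\cong S/\partial(S)$ via $\pi\colon fT\mapsto\overline f$, which is Condition~(1.3). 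Theorem~7.1 of \cite{MR2035385}, applied with the divergence $\psi(f\partial)=\partial(f)$ (the divergence relative to $T$) and the identification $\pi$, then yields the universal central extension $\mathcal{S}_m(P)\oplus S/\partial(S)$ with the asserted bracket, since
\[ \pi\bigl(\overline{\psi(f\partial)\,d\psi(g\partial)}\bigr)=\pi\bigl(\overline{\partial(f)\,d(\partial(g))}\bigr)=\pi\bigl(\overline{\partial(f)\,\partial(\partial(g))\,T}\bigr)=\overline{\partial(f)\,\partial(\partial(g))}. \]

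It remains to compute $\dim S/\partial(S)$. Using $S=\bigoplus_{k=0}^{m-1}\C[t]P^{k/m}$ and, for all $i\ge0$,
\[ \partial(t^i)=it^{i-1}P^{(m-1)/m},\qquad \partial\bigl(t^iP^{k/m}\bigr)=t^{i-1}\bigl(iP+\tfrac{k}{m}\,tP'\bigr)P^{(k-1)/m}\quad(1\le k\le m-1), \]
one checks that the images of the summands lie in pairwise distinct homogeneous pieces of $S$: $\partial(\C[t])$ is all of $\C[t]P^{(m-1)/m}$, while for $1\le k\le m-1$ the image of $\C[t]P^{k/m}$ equals $V_k\cdot P^{(k-1)/m}$, where $V_k=\Span_{\C}\{\,t^{i-1}(iP+\tfrac km tP')\mid i\ge0\,\}\subseteq\C[t]$. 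Hence $\dim S/\partial(S)=\sum_{k=1}^{m-1}\dim\bigl(\C[t]/V_k\bigr)$, and I would finish by showing $\dim\C[t]/V_k=n-1$ for each $k$: for $i\ge1$ the polynomial $g_i:=t^{i-1}(iP+\tfrac km tP')$ has degree $n-1+i$ with leading coefficient $(i+\tfrac{kn}{m})\,\mathrm{lc}(P)\ne0$, so $\{g_i\mid i\ge1\}$ is a basis of a complement $W_k$ of $\C[t]_{\le n-1}=\Span_{\C}\{1,t,\dots,t^{n-1}\}$ in $\C[t]$, and $g_0=\tfrac km P'$ is a nonzero element of $\C[t]_{\le n-1}$; thus $V_k=W_k\oplus\C g_0$ has codimension $n-1$. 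Summing over $k=1,\dots,m-1$ gives $\dim S/\partial(S)=(n-1)(m-1)$.

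Every step above is a transcription of Lemma~8 and the proof of \thmref{der.b}; the one point needing real attention — the expected main obstacle — is the bookkeeping in the last paragraph, where the index $i$ runs only over $i\ge0$, not over all of $\Z$ as in \thmref{der.b}. This is precisely what removes the extra summand in the formula (in the $\mathcal{R}_m(P)$ case the class $\overline{t^{-1}P^{(m-1)/m}}$ survives, since $\partial(\C[t^{\pm1}])$ misses $t^{-1}P^{(m-1)/m}$, whereas here $\partial(\C[t])=\C[t]P^{(m-1)/m}$ already) and what lowers the codimension of each $V_k$ from $n$ to $n-1$.
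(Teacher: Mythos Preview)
Your proof is correct and follows essentially the same approach as the paper's. The paper likewise defers the central-extension portion to the proof of \thmref{der.b} and focuses only on computing $\dim S/\partial(S)$, observing that $(iP+\tfrac{k}{m}tP')t^{i-1}$ has degree $n+i-1$ for all $i\in\Z_+$ (including $i=0$, where it equals $\tfrac{k}{m}P'$), so that these polynomials together with $\{1,t,\dots,t^{n-2}\}$ form a basis of $\C[t]$; your version simply makes the $i=0$ term explicit and phrases the conclusion as $V_k=W_k\oplus\C g_0$, which is the same degree-counting argument.
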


 \begin{proof} The proof is similar to that of Theorem \ref{der.b}. Here we only compute $\dim S/\partial(S)$. Similarly we have
 \begin{equation}\label{eq-3}\partial(t^i)= it^{i-1} P^{\frac{m-1}{m}},\forall i\in \Z_+,\end{equation}\begin{equation}\label{eq-4} \partial(t^{i}P^{\frac{k}{m}})=(iP+\frac{k}{m}t P')t^{i-1}P^{\frac{k-1}{m}},\forall i\in \Z_+, k=1,2,\ldots,m-1.\end{equation}

 Note that   ${\rm deg}((iP+\frac{k}{m}t P')t^{i-1})=n+i-1$ for all $i\in \Z_+$ and $k=1,2,\ldots,m-1$. Thus $\C[t]$ has  basis $\{(iP+\frac{k}{m}t P')t^{i-1}|i\in \Z_+\}\cup \{1,t, \ldots,t^{n-2}\}$ for all $k=1,2,\ldots,m-1$. Combining with (\ref{eq-3}) and (\ref{eq-4}), we have $\dim S/\partial(S)=(m-1)(n-1)$.
 \end{proof}

\section{Invertible elements of $R_2(P)$  and ${S}_2({P})$}

To study the isomorphisms and automorphisms among the various Lie
algebras $\mathcal{R}_m({P})$ (resp. $\mathcal{S}_m({P})$), from
\cite{MR966871} we need only to consider isomorphisms and
automorphisms of the Riemann surfaces  ${R}_m({P})$ (resp.
${S}_m({P})$). This is a very hard problem (See \cite{MR2203507}).
So far we actually know the  automorphism groups of compact Riemann
surfaces with genus $0$ or $1$ only, and there are scattered results
on higher genus Riemann surfaces, see \cite{MR2203507, MR1796706}
and the references therein. We point out that the Riemann surfaces
${R}_m({P})$ (resp. ${S}_m({P})$) are generally not compact which makes the problem even harder.

So we will make some attempts in the rest of the paper to study
isomorphisms and automorphisms among some of the  hyperelliptic Lie
algebras $\mathcal{R}_2({P})$ (resp. $\mathcal{S}_2({P})$) whose
corresponding Riemann surfaces ${R}_2({P})$ and ${S}_2({P})$ have
higher genus in general.

We will first determine the unit group $R_2^*(P)$ (resp. $S^*_2(P)$)
for some $R_2(P)$ (resp. $S_2(P)$), i.e., the multiplicative group
consisting of all invertible elements in $R_2(P) $ (resp. $S_2(P)$)
for some special polynomial $P\in\C[t]$. {We always assume
that both Lie algebras $\mathcal{R}_2({P})$ and $\mathcal{S}_2({P})$
are simple in this section, and use the realizations
$ {R}_2({P})=\C[t^{\pm1}, \sqrt{P}]$ and $ {S}_2({P})=\C[t, \sqrt{P}]$. The conjugate of
$X=f+g\sqrt{P}\in R_2(P)$ is defined as $\overline{X}=f-g\sqrt{P}$.} 

\begin{lemma} \label{relprimelemma}
(a). The unit group  $R_2^*(P)$ of $R_2(P)$ is 
$$\{t^i\,|\,i\in\Z\}\cdot\{f+g\sqrt{P}\ |\ f,g\in\C[t], f^2-g^2P=c t^k\ \text{for\ some\ } c\in\C^*\ \text{and}\
k\in\Z_+\}.$$

 (b). The unit group of $S_2(P)$ is
%
$$S^*_2(P)=\{f+g\sqrt{P}\ |\ f,g\in\C[t], f^2-g^2P=c {\text{ for some }} c\in\C^* \}.$$
\end{lemma}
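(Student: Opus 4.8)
The plan is to exploit the conjugation (norm) map on the quadratic extension $R_2(P)=\C[t^{\pm1},\sqrt P]$ over $\C[t^{\pm1}]$. For $X=f+g\sqrt P$ with $f,g\in\C[t^{\pm1}]$, set $N(X)=X\overline X=f^2-g^2P\in\C[t^{\pm1}]$. Since conjugation is a ring automorphism of $R_2(P)$ (it fixes $t^{\pm1}$ and sends $\sqrt P\mapsto-\sqrt P$, and $u^2-P$ is preserved because $\gcd(P,P')$ is a unit so $P$ has no repeated factor apart from a possible power of $t$, keeping $u^2-P$ irreducible over $\C[t^{\pm1}]$ as in Lemma~8(a)), the map $N$ is multiplicative: $N(XY)=N(X)N(Y)$. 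Hence if $X\in R_2^*(P)$ then $N(X)$ is a unit of $\C[t^{\pm1}]$, i.e. $N(X)=ct^k$ for some $c\in\C^*$, $k\in\Z$; conversely if $N(X)=ct^k$ then $\overline X\cdot(c^{-1}t^{-k})$ is a two-sided inverse of $X$, so $X$ is a unit. This already shows $R_2^*(P)$ consists exactly of the $f+g\sqrt P$ (with $f,g\in\C[t^{\pm1}]$) such that $f^2-g^2P$ is a unit of $\C[t^{\pm1}]$.

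Next I would reduce from Laurent polynomial coefficients to ordinary polynomial coefficients, which is the only real bookkeeping step. Write $f=t^{-a}f_0$, $g=t^{-b}g_0$ with $f_0,g_0\in\C[t]$ not divisible by $t$ (take $a=0$ if $f=0$, etc.), and let $j=\min(a,b)$. Then $X=t^{-j}\big(t^{j-a}f_0+t^{j-b}g_0\sqrt P\big)$, and the factor in parentheses has coefficients in $\C[t]$; moreover at least one of its two coefficients has nonzero constant term unless that coefficient is zero. So modulo the central unit factor $t^{\Z}$ we may assume $f,g\in\C[t]$. It remains to check that for such $f,g$ (not both zero, and not both divisible by $t$ after we have pulled out the common $t$-power) the condition ``$f^2-g^2P=ct^k$ for some $c\in\C^*,k\in\Z_+$'' is precisely what is needed, and that no further power of $t$ can be extracted; a short valuation-at-$0$ argument handles the exponent $k\ge 0$. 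This yields the displayed description in~(a). For part~(b), the same norm argument applies verbatim with $\C[t^{\pm1}]$ replaced by $\C[t]$, whose unit group is just $\C^*$; thus $X=f+g\sqrt P\in S_2^*(P)$ iff $f^2-g^2P\in\C^*$, giving~(b) with no $t$-power factor.

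The one point that needs genuine care — and which I expect to be the main obstacle in writing this cleanly — is the well-definedness and multiplicativity of conjugation, i.e. that $f+g\sqrt P\mapsto f-g\sqrt P$ is a ring automorphism. This rests on $u^2-P(t)$ being irreducible over $\C[t^{\pm1}]$ (so that $R_2(P)$ is a domain and genuinely a degree-two free extension with basis $1,\sqrt P$, and a given element has a \emph{unique} expression $f+g\sqrt P$); irreducibility in turn uses the standing hypothesis that $P$ has no multiple nonzero root and $P\ne ct^r$ with $(r,2)\ne1$, exactly the simplicity hypothesis invoked at the start of the section, so $P$ is not a square in $\C[t^{\pm1}]$. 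Once that is in place, multiplicativity of the norm is the direct computation $(f_1+g_1\sqrt P)(f_2+g_2\sqrt P)=(f_1f_2+g_1g_2P)+(f_1g_2+f_2g_1)\sqrt P$ together with the parallel computation for the conjugates, and everything else is the elementary unit-group bookkeeping sketched above.
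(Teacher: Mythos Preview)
Your approach via the norm map $N(X)=X\overline{X}$ is correct and in fact cleaner than the paper's. The paper instead writes the inverse explicitly as $t^{-k}(x+y\sqrt{P})$ with $x,y\in\C[t]$, extracts the system $fx+gyP=t^k$, $fy+gx=0$, reduces (after multiplying by a power of $t$) to $\gcd(f,g)=1$, and solves to get $x=cf$, $y=-cg$, whence $c(f^2-g^2P)=t^k$. Your multiplicativity-of-norm argument bypasses this linear algebra: $X$ a unit forces $N(X)N(X^{-1})=1$, so $N(X)\in(\C[t^{\pm1}])^*=\C^*t^{\Z}$; conversely $N(X)=ct^k$ makes $c^{-1}t^{-k}\overline{X}$ an inverse. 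What the paper's route buys is that it never names ``conjugation'' or ``norm'' and so avoids having to justify that these are well-defined; what yours buys is that both directions become one-liners. Two small fixes: in your Laurent-to-polynomial reduction you want $j=\max(a,b)$, not $\min$, so that both $t^{j-a}f_0$ and $t^{j-b}g_0$ land in $\C[t]$ (with $j=\max$ exactly one exponent is zero, giving the nonzero constant term you claim); and you do not actually need irreducibility of $u^2-P$ for conjugation to be a well-defined ring endomorphism, since the free-module decomposition $R_2(P)=\C[t^{\pm1}]\oplus\C[t^{\pm1}]u$ (quotient by a monic polynomial in $u$) already gives unique representation, and $(-u)^2=P$ shows the map respects the defining relation---so that whole closing paragraph can be shortened to a sentence.
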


\begin{proof} (a). For convenience, we denote
$$M=\{f+g\sqrt{P}\ |\ f,g\in\C[t], f^2-g^2P=c t^k\ \text{for\ some\ } c\in\C^*\ \text{and}\
k\in\Z_+\}$$ It is easy to see that $M\subseteq R^*_2(P)$.

Now suppose $f+g \sqrt{P} \in R_2^*(P)$ where $f,g\in \C[t^{\pm1}]$.
If $g=0$, then it is obvious that $f$ is a nonzero multiple of some
$t^k, k\in\Z$. Now suppose $g\neq 0$. Multiplying $f+g\sqrt{P}$ by a
suitable $t^l, l\in\Z$, we may assume that $f, g\in \C[t]$ and
$\gcd(f,g)=1$. There exists $t^{-k}(x+y\sqrt{P})$ where $k\in\Z_+$,
$x,y\in\C[t]$ with either $x(0)\neq 0$ or $y(0)\neq 0$ such that
$$
1=t^{-k}(x+y\sqrt{P})(f+g\sqrt{P})=t^{-k}\big(fx+gyP+(fy+gx)\sqrt{P}\big)
$$
We obtain that
$$fx+gyP=t^k, \quad fy+gx=0,$$
from which we see that $\gcd(x,y)=1$. Then there exists some
$c\in\C^*$ such that $x=cf$ and $y=-cg$. Now using $fx+gyP=t^k,$ we
get
\begin{equation}\label{Laurentuniteqn}
t^k=cf^2-cg^2P=c(f^2-g^2P),
\end{equation}
forcing $f+g\sqrt{P}\in M$. Therefore (a) follows.

(b). The proof for this part is similar to that of (a).
\end{proof}

From the above lemma, we see that to determine the unit group
$R^*_2(P)$ is equivalent to solving the Laurent polynomial equations
$f^2-g^2P=t^k$ for $k\in\Z_+$, and to determine the unit group
$S^*_2(P)$ is equivalent to solving the polynomial Pell equation
$f^2-g^2P=1$. The study on solutions of the polynomial Pell equation
$$ f^2-g^2P=1$$
for a given $P\in\C[t]$ is a very famous and very hard problem (See
\cite{MR2183270}). We remark that when $P=t^2-1$, the solutions to
the above Pell equation are the famous Chebyshev polynomials (See
\cite{MR1937591}).

\begin{lemma}\label{oddposdegreet}
Suppose that $\deg(P)$ is odd. Then $S^*_2(P)=\C^*$. If furthermore
$P(t)$ has no multiple roots and $t | P$, then
\begin{itemize}
\item[(i).] The polynomial equation $ f^2-g^2P=t^{k} $ for $k\geq 0$ has solutions for
$f,g\in\C[t]$ with $\gcd(f,g)=1$ and $g\ne0$ if and only if
$\deg(P)=1$ and $k=1$; Moreover, the only solutions for the equation
$f^2-g^2t=t$ are $f=0, g=\pm\sqrt{-1}$.
\item[(ii).] $R^*_2(P)=\bigcup_{k\in\Z}\C^*t^k\cong \mathbb C^*\times \mathbb Z$ if $\deg(P)\geq 3$,
and $$R^*_2(P)=\bigcup_{k\in\Z}\Big(\C^*t^k\cup
\C^*t^{k+\frac{1}{2}}\Big)\cong\C^*\times\Z\times(\Z/2\Z)$$
if $P=c^2 t$ for some $c\in\C^*$.
\end{itemize}
\end{lemma}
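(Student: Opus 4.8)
The plan is to deal with the three assertions in turn, all of them resting on degree/parity considerations combined with \lemref{relprimelemma}.

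First I would prove $S_2^*(P)=\C^*$ when $\deg P$ is odd. By \lemref{relprimelemma}(b), an element $f+g\sqrt P$ is a unit iff $f^2-g^2P=c\in\C^*$. If $g\neq 0$, then $\deg(g^2P)=2\deg g+\deg P$ is odd, while $\deg(f^2)$ is even; since these two polynomials must have equal degree in order to cancel down to a constant, we get a contradiction. Hence $g=0$, $f^2=c$, so $f\in\C^*$, giving $S_2^*(P)=\C^*$.

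Next, for (i), assume additionally that $P$ is separable and $t\mid P$, and suppose $f^2-g^2P=t^k$ with $\gcd(f,g)=1$, $g\neq0$, $k\ge0$. The key point is again a degree count: $\deg(f^2)=2\deg f$ is even and $\deg(g^2P)=2\deg g+\deg P$ is odd, so the leading terms cannot cancel and $k=2\deg g+\deg P$ is odd with $\deg(g^2P)>\deg(f^2)$. Differentiating the relation gives $2ff'-(g^2P)'=kt^{k-1}$, i.e. $2ff'-2gg'P-g^2P'=kt^{k-1}$; combined with $g^2P=f^2-t^k$ one can eliminate $t^k$ and obtain a divisibility forcing $g$ to divide a low-degree expression. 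More directly: write $P=tP_1$ with $P_1(0)\ne0$ and $\gcd(P_1,t)=1$. Reducing $f^2-g^2P=t^k$ modulo $t$ (using $k\ge1$, which holds since $g\ne0$ and the degree is odd $\ge1$) gives $f(0)^2=0$, so $t\mid f$, say $f=tf_2$; then $t^2f_2^2-g^2tP_1=t^k$ forces $k\ge 1$ and, after dividing by $t$, $tf_2^2-g^2P_1=t^{k-1}$. If $k\ge 2$ we reduce mod $t$ again to get $g(0)^2P_1(0)=0$, hence $t\mid g$, contradicting $\gcd(f,g)=1$ (as $t\mid f$). So $k=1$, and $tf_2^2-g^2P_1=1$. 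A degree count in this last equation: $\deg(tf_2^2)=1+2\deg f_2$ is odd, $\deg(g^2P_1)=2\deg g+\deg P-1$ is even; for the difference to be the constant $1$ we need $\deg(tf_2^2)=\deg(g^2P_1)$, impossible by parity unless both sides are forced to be small — precisely, $\deg P_1=\deg P-1$, and one checks $\deg P=1$ is the only possibility, whence $P_1\in\C^*$, $f_2=0$ (comparing the top terms), $g^2P_1=-1$, so $g=\pm\sqrt{-1}$ after rescaling $P=t$ (using $t\mid P$, $\deg P=1$, separable). Tracking constants gives exactly $f=0$, $g=\pm\sqrt{-1}$ as solutions of $f^2-g^2t=t$.

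Finally, for (ii) I would apply \lemref{relprimelemma}(a): $R_2^*(P)=\{t^i\}\cdot M$ where $M$ consists of $f+g\sqrt P$ with $f,g\in\C[t]$, $f^2-g^2P=ct^k$, $c\in\C^*$, $k\in\Z_+$. After pulling out a power of $t$ we may assume $\gcd(f,g)=1$. If $\deg P\ge 3$, part (i) says the only such solutions with $g\ne 0$ would require $\deg P=1$, so $g=0$, $f=ct^{k/2}$, forcing $k$ even and $f\in\C^*t^{k/2}$; hence $M=\C^*$ and $R_2^*(P)=\bigcup_{k\in\Z}\C^*t^k\cong\C^*\times\Z$. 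If $P=c^2t$, then $\sqrt P=c\,t^{1/2}$ already lies in $R_2(P)$, so $t^{1/2}$ is a unit, and the unit group is generated by $\C^*$, $t^{\pm1}$ and $t^{1/2}$; since $(t^{1/2})^2=t$, we get $R_2^*(P)=\bigcup_{k\in\Z}(\C^*t^k\cup\C^*t^{k+1/2})\cong\C^*\times\Z\times(\Z/2\Z)$, and one must check there are no further units, which again follows from the degree argument in (i) applied to $f^2-g^2t=ct^k$.

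The main obstacle is the heart of part (i): showing that $f^2-g^2P=t^k$ with $\gcd(f,g)=1$, $g\ne0$ has no solution unless $\deg P=1$, $k=1$. The parity-of-degree observation gets one started, but one really needs the repeated reduction modulo $t$ together with the separability hypothesis on $P$ to rule out all $k\ge 2$ and to pin down $\deg P=1$; handling the interaction of the $\gcd$ condition with these reductions is the delicate bookkeeping. Everything else is then a short deduction from \lemref{relprimelemma}.
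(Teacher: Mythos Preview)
Your proposal is correct and follows essentially the same approach as the paper: the parity-of-degree observation to handle $S_2^*(P)=\C^*$ and the small-$k$ cases, together with the reduction modulo $t$ (using $t\mid P$ and that $0$ is a simple root) to rule out $k\ge 2$, then deducing (ii) from (i) via \lemref{relprimelemma}. The only cosmetic difference is the case organization --- the paper splits into $k\ge 2$ versus $k\le 1$ and cites $S_2^*(P)=\C^*$ as known, whereas you first force $k=1$ and then run the parity argument inside $tf_2^2-g^2P_1=1$; the underlying mechanism is identical.
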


\begin{proof} The result $S^*_2(P)=C^*$ is well-known (See \cite{MR2183270}).
Part (ii) follows directly from (i). For part (i), we suppose that
$f, g$ are solutions to the equation $f^2-g^2P=t^k$ for some $ k\in\Z_+$ with
$\gcd(f,g)=1$.

If $k\geq 2$, then $t|P$ implies $t|f$. We assume that $P=tQ$ and
$f=th$ for some $Q, h\in\C[t]$. Hence
$$
th^2=g^2Q+t^{k-1}.
$$
This means that $t$ divides $g$. Then $g$ and $f$ are not
relatively prime, a contradiction.

Suppose now $k\leq 1$. If $\deg(P)\geq 3$, then
$\deg(g^2P+t^k)=\deg(g^2P)$, which by assumption is odd. However
$\deg(f^2)$ is even, impossible.

If $\deg(P)=1$ and $k=0$, we can deduce contradiction similarly as
above. Now suppose $\deg(P)=1$, say, $P=c^2 t$ for some $c\in\C^*$,
and $k=1$. It is easy to see that the only solutions for
$f^2-c^2g^2t=t$ are given by $f=0$ and $g=\pm\sqrt{-1} c^{-1}$.
\end{proof}

Next we turn our attention to the most interesting case studied by
Date, Jimbo, Kashiwara and Miwa \cite{DJKM} where they investigated
integrable systems arising from Landau-Lifshitz differential
equation. Let $$\displaystyle{P(t)=\frac{t^4-2\beta
t^2+1}{\beta^2-1}},\ \ \beta\neq \pm 1.$$ Observe that in this case
$P(t)=q(t)^2-1$ where $q(t)=\frac{t^2-\beta}{\sqrt{\beta^2-1}}$, and
we    will see that the group $R_2^*(P)$ is quite big.

For convenience, let
\begin{equation*}\begin{split}
& \lambda_0=\frac{t^2-\beta}{\sqrt{\beta^2-1}}+\sqrt{P},\\
& \lambda_1=\frac{t^2+1}{\sqrt{2(\beta+1)}}+\sqrt{\frac{\beta-1}{2}}\sqrt{P},\\
& \lambda_2=\frac{t^2- 1}{\sqrt{2(\beta-1)}}+\sqrt{\frac{\beta+1}{2}}\sqrt{P}.\\
& \lambda_3=\frac{\beta t^2-1}{\sqrt{\beta^2-1}}+\sqrt{P}.\\
\end{split}\end{equation*}
It is easy to verify that $\lambda_0, \lambda_1, \lambda_2,
\lambda_3\in R_2^*(P)$ and $\lambda_0 \in S_2^*(P)$. Actually,
$\lambda_0 \bar\lambda_0=1$, $\lambda_1\bar\lambda_1=t^2$,
$\lambda_2\bar\lambda_2=t^2$, $\lambda_1\lambda_2=t^2\lambda_0$, and
$\lambda_1\bar\lambda_2=\lambda_3$.

For later use now we define a non-linear operator $\tau$ on $S_2(P)$
as follows. For $f(t)+g(t)\sqrt{P}\in S_2(P)$ with $f, g\in\C[t]$,
we define
$$\tau(f(t)+g(t)\sqrt{P})=t^r(f(t^{-1})+g(t^{-1})t^{-2}\sqrt{P})\in
S_2(P),$$ where $r=\max\{\deg(f), \deg(g)+2\}$. 
We can easily obtain that
$$\tau(t^m(f+g\sqrt{P}))=\tau(f+g\sqrt{P}),\ \forall\ m\in\Z_+$$
and
$$\tau^2(f+g\sqrt{P})=f+g\sqrt{P},\ \ \text{if}\ \ t\nmid\gcd(f,g).$$
Moreover, we have
$$\tau(\lambda_0)=-\bar\lambda_1\lambda_2,\ \
\tau(\lambda_1)=\lambda_1,\ \ \tau(\lambda_2)=-\bar\lambda_2.$$ For
any $X, Y\in S_2(P)$ we have $\tau(XY)=t^k\tau(X)\tau(Y)$ for some
$k\in\Z$.
\begin{theorem}
Let $P(t)=\frac{t^4-2\beta t^2+1}{\beta^2-1}$ where $\beta\ne\pm1$.
\begin{itemize}\item[(a).]  As a multiplicative group, $R_2^*(P)$ is generated by $\C^*$, $t,
 \lambda_1, \lambda_2$.
\item[(b).] $R_2^*(P)\simeq\mathbb C^*\times\Z\times\Z\times\Z.$
\item[(c).]
$S_2^*(P)= \mathbb C^*\cdot \{ \lambda_0^i |\,i \in\mathbb
Z\}\simeq\mathbb C^*\times\Z.$
\end{itemize}
\end{theorem}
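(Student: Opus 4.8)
The plan is to compute both unit groups geometrically. Write $P(t)=\frac{(t^{2}-a^{2})(t^{2}-b^{2})}{\beta^{2}-1}$, where $a^{2},b^{2}$ are the two roots of $x^{2}-2\beta x+1$; since $\beta\neq\pm1$ these are distinct, $a^{2}b^{2}=1$ and $P(0)=\frac{1}{\beta^{2}-1}\neq0$, so $P$ is separable with four nonzero roots. Hence the affine curve $u^{2}=P(t)$ is smooth and its smooth projective model $E$ is an elliptic curve; since $\deg P=4$, the function $t$ has two simple poles $\infty_{+},\infty_{-}$ on $E$ and two points $O_{+},O_{-}$ lying over $0$. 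I will use the standard identifications: $R_{2}(P)=\C[t^{\pm1},\sqrt P]$ is the ring of rational functions on $E$ regular off $S:=\{O_{+},O_{-},\infty_{+},\infty_{-}\}$, and $S_{2}(P)=\C[t,\sqrt P]$ is the ring of those regular off $S':=\{\infty_{+},\infty_{-}\}$; under these, conjugation $X\mapsto\overline{X}$ is the hyperelliptic involution, which interchanges $O_{+}\leftrightarrow O_{-}$ and $\infty_{+}\leftrightarrow\infty_{-}$.

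The engine is the divisor map. For a finite $T\subset E$, a rational function on $E$ that is regular off $T$ and has no zeros or poles is constant; so $u\mapsto\operatorname{div}(u)$ embeds $\{\text{functions regular off }T\}^{*}/\C^{*}$ into the group $\operatorname{Div}^{0}_{T}(E)$ of degree-zero divisors supported on $T$, with image equal to the kernel of the Abel--Jacobi map $\alpha\colon\operatorname{Div}^{0}_{T}(E)\to\operatorname{Pic}^{0}(E)\cong E$ (a divisor supported on $T$ lies in the image iff it is principal). So I must compute $\operatorname{div}(t),\operatorname{div}(\lambda_{1}),\operatorname{div}(\lambda_{2}),\operatorname{div}(\lambda_{0})$ and then pin down $\ker\alpha$. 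First, $\operatorname{div}(t)=O_{+}+O_{-}-\infty_{+}-\infty_{-}$. Next, from $\lambda_{1}\bar\lambda_{1}=t^{2}$ the divisor of $\lambda_{1}$ is supported on $S$; evaluating $\lambda_{1}$ at the two points over $t=0$ --- here $\beta\neq\pm1$ is used: the value is a nonzero multiple of $\sqrt{2/(\beta+1)}$ at one of $O_{\pm}$ and $0$ at the other --- and comparing leading terms at $\infty_{\pm}$ yields $\operatorname{div}(\lambda_{1})=2O_{-}-2\infty_{+}$; symmetrically $\operatorname{div}(\lambda_{2})=2O_{+}-2\infty_{+}$; and then $\operatorname{div}(\lambda_{0})=\operatorname{div}(t^{-2}\lambda_{1}\lambda_{2})=2\infty_{-}-2\infty_{+}$.

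To finish (a) and (b), take the $\Z$-basis $e_{1}=O_{+}-\infty_{+}$, $e_{2}=O_{-}-\infty_{+}$, $e_{3}=\infty_{-}-\infty_{+}$ of $\operatorname{Div}^{0}_{S}(E)$. Then $\operatorname{div}(\lambda_{2})=2e_{1}$, $\operatorname{div}(\lambda_{1})=2e_{2}$ and $\operatorname{div}(t)=e_{1}+e_{2}-e_{3}$ are principal, so $\alpha(e_{1}),\alpha(e_{2})\in E[2]$ and $\alpha(e_{3})=\alpha(e_{1})+\alpha(e_{2})$. Since no two of $O_{+},O_{-},\infty_{+},\infty_{-}$ are linearly equivalent on the genus-one curve $E$, the three elements $\alpha(e_{1}),\alpha(e_{2}),\alpha(e_{3})$ are distinct and nonzero --- hence they are exactly the three nontrivial $2$-torsion points of $E$, and $\alpha(\operatorname{Div}^{0}_{S}(E))=E[2]$. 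Thus $\ker\alpha$ has index $4$ in $\operatorname{Div}^{0}_{S}(E)$; but $\langle\operatorname{div}(t),\operatorname{div}(\lambda_{1}),\operatorname{div}(\lambda_{2})\rangle$ is a rank-three subgroup contained in $\ker\alpha$ whose index in $\operatorname{Div}^{0}_{S}(E)$ is also $4$ (the absolute value of the determinant, in the basis $e_{1},e_{2},e_{3}$, of the matrix with rows $(1,1,-1),(0,2,0),(2,0,0)$), hence $\ker\alpha=\langle\operatorname{div}(t),\operatorname{div}(\lambda_{1}),\operatorname{div}(\lambda_{2})\rangle$. Translating back through the divisor map, $R_{2}^{*}(P)/\C^{*}$ is free abelian of rank three on the classes of $t,\lambda_{1},\lambda_{2}$, which is (a), and $R_{2}^{*}(P)\cong\C^{*}\times\Z^{3}$, which is (b). For (c): $\operatorname{Div}^{0}_{S'}(E)=\Z e_{3}$ and $\alpha(e_{3})$ has order $2$, so $\ker(\alpha|_{\operatorname{Div}^{0}_{S'}(E)})=2\Z e_{3}$, which is generated by $\operatorname{div}(\lambda_{0})=2e_{3}$; hence $S_{2}^{*}(P)=\C^{*}\cdot\{\lambda_{0}^{i}\mid i\in\Z\}\cong\C^{*}\times\Z$.

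The main obstacle I anticipate is the local bookkeeping in the second paragraph: carefully constructing the model $E$, the two points $\infty_{\pm}$ (where $t$ has a simple and $\sqrt P$ a double pole) and the two points $O_{\pm}$, and verifying by hand that $\operatorname{div}(\lambda_{1})=2O_{-}-2\infty_{+}$ and $\operatorname{div}(\lambda_{2})=2O_{+}-2\infty_{+}$ --- this is precisely where the hypothesis $\beta\neq\pm1$ does its work. For part (c) there is also a self-contained alternative avoiding all geometry: by \lemref{relprimelemma}(b) a nonscalar unit is $f+g\sqrt P$ with $f,g\in\C[t]$, $g\neq0$, $f^{2}-g^{2}P\in\C^{*}$; then $\deg f=\deg g+2$ and the leading coefficients of $f,g$ are proportional, so exactly one of $\lambda_{0},\bar\lambda_{0}$ (according to the sign of that proportion) cancels both top-degree terms of $f+g\sqrt P$, and multiplying by it strictly lowers $\deg f+\deg g$; induction then forces $g=0$, $f\in\C^{*}$. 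A similar degree-and-valuation descent using $t,\lambda_{1},\lambda_{2}$ can be set up for (a), but it is fussier because of the monomial $ct^{k}$ on the right of $f^{2}-g^{2}P=ct^{k}$ (cf.\ \lemref{relprimelemma}(a)), so I would still favour the divisor-theoretic route there.
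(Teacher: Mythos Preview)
Your argument is correct and takes a genuinely different route from the paper. The paper works purely algebraically: via \lemref{relprimelemma}(a) it reduces (a) to classifying coprime pairs $(f,g)\in\C[t]^2$ with $f^2-g^2P=t^k$, then uses $P=q^2-1$ (with $q=(t^2-\beta)/\sqrt{\beta^2-1}$) to run a degree descent in $g$ by multiplying by powers of $\lambda_0=q+\sqrt P$, followed by a case split on $k$ (odd, $k=0$, $k=2$, general even) combined with the symmetry $t\mapsto t^{-1}$ encoded in the operator $\tau$; part (b) is then a bare-hands verification that $\langle\C^*,t\rangle\cap\langle\lambda_1,\lambda_2\rangle=\{1\}$, and (c) is quoted from the Pell-equation literature. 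Your divisor-theoretic argument replaces all of this by one structural computation on the elliptic model $E$: once $\operatorname{div}(t),\operatorname{div}(\lambda_1),\operatorname{div}(\lambda_2)$ are known, the whole unit group drops out of the index-four calculation for $\ker\alpha$, and (b) and (c) come for free. This is more conceptual, explains the rank three (it is $|S|-1$ since the image of $\alpha$ is finite), and isolates exactly where $\beta\neq\pm1$ is used (smoothness of $E$ and $P(0)\neq0$, giving four distinct points in $S$). The paper's method, by contrast, is elementary and self-contained --- no Riemann--Roch or Picard group is needed --- and actually exhibits the Pell solutions explicitly, which matches the section's emphasis on the polynomial Pell equation. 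Your anticipated obstacle, the local bookkeeping for $\operatorname{div}(\lambda_i)$, is real but routine: the identities $\lambda_i\bar\lambda_i=t^2$ and $\lambda_1\lambda_2=t^2\lambda_0$, together with the values at $O_\pm$ and the leading behaviour at $\infty_\pm$, pin the divisors down uniquely as you sketch.
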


\begin{proof} (a). By \lemref{relprimelemma}, 
we need to find all polynomial solutions $(f,g)$ to the equation $f^2-g^2P=t^k$
for ay fixed $k\in\mathbb \Z_+$ with $\gcd(f,g)=1$ and $g\neq 0$. We will use
the following facts frequently:
\begin{equation}\label{eq-09}(f+g\sqrt{P})(q\pm \sqrt{P})=(fq\pm gP)+(gq\pm f)\sqrt{P},\end{equation}
\begin{equation}\label{eq-10}\gcd(fq\pm gP,gq\pm f)=1,\end{equation}
\begin{equation}\label{eq-11}(fq\pm gP)^2-(gq\pm f)^2P=t^k,\end{equation}
\begin{equation}\label{eq-deg}
(gq-f)(gq+f)=g^2q^2-f^2=g^2(P+1)-f^2=g^2-t^k.
\end{equation}
From (\ref{eq-09})--(\ref{eq-11}), replacing $(f+g\sqrt{P})$ with
$(f+g\sqrt{P})\lambda_0^m$ for suitable $m\in \Z$ if necessary, we
may assume that
\begin{equation}\label{eq-12}\deg(gq\pm f)\ge \deg g.\end{equation}

And if $\deg g^2\ge k$, then from (\ref{eq-deg}) and (\ref{eq-12}),
we get $\deg(gq-f)=\deg(gq+f)=\deg g$ or $\deg g=k=0$. Noting that $\deg(q)=2$ we cannot cancel the highest terms of $gq$ and of $f$ in both   $gq-f$ and $gq+f$. The first case cannot occur.   So we have proved
\begin{equation}\label{eq-13}2\deg(g)<k,\,\, {\rm{or}}\,\, \deg g=k=0.\end{equation}
Similarly from (\ref{eq-deg}) and (\ref{eq-12}) we may deduce that
\begin{equation}\label{eq-14}k\ne 2\deg(g)+1.\end{equation}
In particular, there is no such resolution if $k=1$.

%

\vskip 5pt
\noindent{\bf Case 1}: $k=2l+1$ for some $l\in \N$.

From (\ref{eq-13}) and (\ref{eq-14}), we see that  $\deg g<l$.
Then from (\ref{eq-12}) and (\ref{eq-deg}), we have $\deg f=\deg g+2=l+1$. As $P(1/t)=t^{-4}P(t)$ we get
$$
t^{2l+2}f(1/t)^2-t^{2l-2}g(1/t)^2P(t)=t.
$$
Now $t^{l-1}g(1/t)$ and $t^{l+1}f(1/t)$ are relatively prime in
$\mathbb C[t]$. Hence we reduce to the case that $k=1$ for which
there is no such solution.

\noindent{\bf Case 2:} $k=0$.

From (\ref{eq-13}), we have $g\in \C^*$, and it is easy to see that
the solutions for $f^2=1+g^2P$ are exactly $g=\pm1$ and $f=\pm q$.
Hence $f+g\sqrt{P}=\pm\lambda_0^{\pm1}$.

\noindent{\bf Case 3:} $k=2$.

From (\ref{eq-13}), we have $g=c\in \C^*$. Now we solve for $f$ as
follows:
\begin{align*}
f^2&=t^2+c^2\frac{(t^4-2\beta t^2+1)}{\beta^2-1} \\
&=\frac{c^2t^4+(\beta^2-1-2\beta c^2)t^2+c^2 }{\beta^2-1}
\end{align*}
which is a square in $\C[t]$ if and only if
$$
2\beta c^2\pm 2c^2-\beta^2+1=0.
$$
Solving this equation for $c$ we get
$$
g^2=c^2=\frac{\beta\pm 1}{2},\qquad \text{and}\qquad f^2=
\frac{(t^2\mp 1)^2}{2(\beta\mp 1)}.
$$
Therefore
 \begin{equation}f+g\sqrt{P}\in \{\pm \lambda_0^i\lambda_1,\pm \lambda_0^i\lambda_2,
 \pm t^2\lambda_0^i\lambda_1^{-1},\pm t^2 \lambda_0^i\lambda_2^{-1}\,|\, i\in\Z\}.\end{equation}





\noindent{\bf Case 4:} $k=2l$, for some $ l\ge2$. \vskip 5pt

Again from (\ref{eq-13}), we have $\deg g<l$.
If $\deg g=l-1$, then $\deg f=l+1$ and
$$
(t^{l+1}f(t^{-1}))^2-(t^{l-1}g(t^{-1}))^2P(t)=t^2,
$$
 where we have used $t^4P(t^{-1})=P(t)$. Now $f(t^{-1})t^{l+1}$
 and $g(t^{-1})t^{l-1}$ are relatively prime in $\C[t]$. Hence we reduced this case to $k=2$. So we have that
\begin{equation*}\begin{split}
\tau(f+g\sqrt P)=f&(t^{-1})t^{l+1}+g(t^{-1})t^{l-1}\sqrt{P} \\
& \in\{\lambda_0^m|m\in \Z\}\cdot \{\pm \lambda_1,\pm \lambda_2, \pm
t^2\lambda_1^{-1},\pm t^2
\lambda_2^{-1}\}.\end{split}\end{equation*} Thus
\begin{equation}f+g\sqrt{P}=\tau^2(f+g\sqrt{P})\in \{t^{i}\lambda_0^{j}\lambda_3^{k}|i,j,k\in \Z\}\cdot \{\pm \lambda_1^{\pm 1},\pm \lambda_2^{\pm 1}\}. \end{equation}

Now we only need to deal with the case $\deg g\le l-2$. If $\deg
g=l-2$, then $\deg f\leq l$; if $\deg g<l-2$, then $\deg f=l$. In
both cases, we have
$$
(t^lf(t^{-1}))^2-(t^{l-2}g(t^{-1}))^2P(t)=1,
$$
where $f(t^{-1})t^{l}$ and $g(t^{-1})t^{l-2}$ are relatively prime
in $\C[t]$. Hence we reduced this case to Case 2: $k=0$. So
$$\tau(f+g\sqrt P)=f(t^{-1})t^l+g(t^{-1})t^{l-2}\sqrt{P}=\pm
(q+\sqrt{P})^r=\pm\lambda_0^r$$ for some $r\in \Z$, i.e.
\begin{equation}f+g\sqrt{P}\in \pm \{t^i\lambda_0^j\lambda_3^k\,|\,i,j,k\in \Z\}.\end{equation}
%
%
So we have proved (a).

\

(b). For any $X\subset R_2^*(P)$, let $\langle  X\rangle$ be the
subgroup of $R_2^*(P)$ generated by $X$. It is clear that $\C^*\cap
\langle t \rangle=\{1\}$, hence $\langle\C^*, t \rangle=\C^*\times
\langle t \rangle$. Since $\lambda_1(0)\ne0$ and $\lambda_2(0)=0$,
then $ \langle \lambda_1, \lambda_2\rangle=\langle
\lambda_1\rangle\times \langle \lambda_2\rangle$.

Now we need only  to show that
\begin{equation} \langle \C^*, t  \rangle \cap
\langle \lambda_1,\lambda_2 \rangle=\{1\}.\end{equation} Suppose we
have $ \lambda_1^i\lambda_2^j=ct^k$ for some $i,j,k\in\Z$ and
$c\in\C^*$. We may assume that $ik\ne0$ and $i>0$. (The argument for $jk\ne0$ is similar).

{\bf Case 1.} $j\ge0$.

We see that $k>0$.   From $ \lambda_1^i\lambda_2^j=ct^k$ we know
that $\bar \lambda_1^i\bar\lambda_2^j=ct^k$.  Multiplying them
together we obtain that $k=i+j$ and $c=\pm 1$. From $
\lambda_1^i\lambda_2^j=\pm t^{i+j}$ we $ \lambda_1^it^{2j}=\pm
t^{i+j}\bar\lambda_2^j$. Since $\lambda_1(0)\bar\lambda_2(0)\ne0$ we
deduce that $i=j$. Going back to $ \lambda_1^i\lambda_2^i=\pm
t^{2i}$ we deduce that $\lambda_0^i=\pm 1$, which is impossible
since $\lambda_0(0)\ne0$. So this case does not occur.

{\bf Case 2.} $j<0$.

If $k>0$, we have  $ \lambda_1^i=c\lambda_2^{-j}t^k$. Taking the
value at $t=0$ for both sides we see a contradiction. So this case
does not occur.

Now $k<0$. We have $ \lambda_1^it^{-k}=c\lambda_2^{-j}$ whose
conjugates are $ \bar\lambda_1^it^{-k}=c\bar\lambda_2^{-j}$. Taking
the value at $t=0$ for both sides we see a contradiction. So this
case does not occur either.

Thus (4.12) holds and (b) follows.

 Part (c) is well known (See
\cite{MR2183270}).
\end{proof}

\section{Isomorphisms and automorphisms of   $R_2(P)$}

In this section we will determine  the necessary and sufficient
conditions for $\mathcal{R}_2({P_1})\simeq
\mathcal{R}_2({P_2})$, in the case  $P_1(t)$ and $P_2(t)$ are separable polynomials of odd degree with one root at $t=0$, then  describe the possible automorphism groups of $\mathcal{R}_2(P)$.

\begin{theorem}\label{isothm} Suppose the polynomials $P_1(t_1)=t_1(t_1-a_1)\cdots (t_1-a_{2n})$
and $P_2(t_2)=t_2(t_2-b_1)\cdots (t_2-b_{2m})$ both have distinct
roots where $a_i, b_j\in\C^*$.  Let  $\phi: R_2({P_1})\to
R_2({P_2})$ be an isomorphism. Then $m=n$ and there is some $c\in\C^*$ and $\gamma\in S_{2n}$ such that
$$
a_{\gamma(i)}=c^2b_{i},\ \forall\ i=1,2,\cdots, 2n, {\rm{\text{ or }}}
$$
$$
a_ib_{\gamma(i)}=c^2,\ \forall\ i=1,2,\cdots, 2n,
$$
where $S_{2n}$ is the symmetry group on $\{1,2,\cdots, 2n\}$; and
$$\phi(t)=c^2t  ,\,\,\,\phi(\sqrt{P_1})=\pm c ^{2n+1}\sqrt{P_2}, \,\,{\text or } $$
$$\phi(t)=c^2t^{-1}  ,\,\,\,\,\,\phi(\sqrt{P_1})=\pm ct^{-n-1}\sqrt{a_1a_2...a_{2n}P_2}.$$
\end{theorem}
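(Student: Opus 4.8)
The plan is to pull everything back to the level of commutative algebras. Since $\phi:R_2(P_1)\to R_2(P_2)$ is an algebra isomorphism, it carries the unit group $R_2^*(P_1)$ onto $R_2^*(P_2)$; but by \lemref{oddposdegreet}(ii) each of these, when $P_i$ has odd degree $\geq 3$ with a simple root at $0$, is exactly $\bigcup_{k\in\Z}\C^* t^k\cong \C^*\times\Z$ (and the degree-$1$ case falls under an earlier lemma). Hence $\phi(t)$ must again be a generator of the free part of $R_2^*(P_2)$ modulo $\C^*$, i.e.\ $\phi(t)=ct^{\pm 1}$ for some $c\in\C^*$. This is the key structural input, and it immediately splits the argument into the two cases recorded in the statement.

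Next I would analyze the image of $\sqrt{P_1}$. Write $P_1(t)=t\prod_{i=1}^{2n}(t-a_i)$. The element $u_1=\sqrt{P_1}$ satisfies $u_1^2=P_1(t)$ and, crucially, $R_2(P_1)$ is a free rank-$2$ module over $\C[t^{\pm 1}]$ with basis $1,u_1$, and the conjugation $X\mapsto\bar X$ is the nontrivial automorphism of $R_2(P_1)$ over $\C[t^{\pm1}]$. Applying $\phi$ and using $\phi(t)=ct^{\pm1}$, the element $\phi(u_1)$ is an element of $R_2(P_2)$ whose square is $P_1(\phi(t))=P_1(ct^{\pm1})$. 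Writing $\phi(u_1)=f+g\sqrt{P_2}$ with $f,g\in\C[t^{\pm1}]$ and squaring gives $f^2+g^2P_2=P_1(\phi(t))$ and $2fg=0$; since $P_1(\phi(t))$ is not a square in $\C[t^{\pm1}]$ (its roots $c a_i$, or $ca_i^{-1}$, together with $0$ or $\infty$, are simple) we must have $f=0$, so $\phi(\sqrt{P_1})=g\sqrt{P_2}$ with $g^2 P_2=P_1(\phi(t))$. Comparing the two sides as Laurent polynomials then forces, in the case $\phi(t)=ct$, the identity $g^2(t)\,t\prod(t-b_j)=ct\prod(t-ca_i)$, whence $\prod(t-b_j)$ and $\prod(t-ca_i)$ agree up to the square $g^2$ and a scalar; since both are separable this forces $m=n$, $g\in\C^*$, and the multiset $\{ca_i\}$ to equal $\{b_j\}$ — giving a permutation $\gamma$ with $a_{\gamma(i)}=cb_i$ — together with the scalar bookkeeping $\phi(\sqrt{P_1})=\pm c^n\sqrt{cP_2}$. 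In the case $\phi(t)=ct^{-1}$ the same computation after clearing powers of $t$ (using $\deg P_1=2n+1$ odd) yields the ``inversion'' relation $a_i b_{\gamma(i)}=c$ and the stated normalization of $\phi(\sqrt{P_1})$; the leading coefficient $a_1a_2\cdots a_{2n}$ appears precisely from rewriting $P_1(ct^{-1})$ as $t^{-(2n+1)}$ times a polynomial with constant term $c\cdot(-1)^{2n}\prod a_i\cdot$(sign).

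The main obstacle I anticipate is the bookkeeping in the second step: one must be careful that $\phi(u_1)$, a priori a general element $\sum_{i}x_i u_2^i$ (equivalently $f+g\sqrt{P_2}$ with $f,g\in\C[t^{\pm1}]$), really has $f=0$, and that the degree/valuation comparison of $g^2P_2$ against $P_1(ct^{\pm1})$ is tight enough to conclude $g$ is a unit monomial and the root multisets match exactly — this uses separability of both $P_1,P_2$ in an essential way and is where the hypothesis ``distinct roots, one root at $0$'' is consumed. Everything else (that $\phi(t)$ is determined up to $\C^*$ and $t\mapsto t^{-1}$, that conjugation is intrinsic, that the two displayed families of $\gamma$-relations are exactly what the two cases produce) is then routine. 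Finally one checks the converse — that each such $\gamma,c$ genuinely defines an isomorphism — by writing down the map on generators and verifying it respects the defining relation $u^2=P(t)$, which is a direct substitution.
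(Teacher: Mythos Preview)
Your proposal is correct and follows essentially the same route as the paper: use \lemref{oddposdegreet} on the unit groups to force $\phi(t)=ct^{\pm1}$, then determine $\phi(\sqrt{P_1})$ and compare $P_1(\phi(t))$ with $g^2P_2$ to read off $m=n$ and the permutation relations. The only noteworthy difference is in how you show $\phi(\sqrt{P_1})\in\C[t^{\pm1}]\sqrt{P_2}$: you write $\phi(\sqrt{P_1})=f+g\sqrt{P_2}$, square, and use $2fg=0$ together with the observation that $P_1(ct^{\pm1})$ is not a square in $\C[t^{\pm1}]$ to kill $f$; the paper instead gives the intrinsic description
\[
\C[t^{\pm1}]\sqrt{P_i}=\{x\in R_2(P_i)\setminus\C[t^{\pm1}]\,:\, x^2\in\C[t^{\pm1}]\},
\]
which is automatically preserved once $\phi(\C[t^{\pm1}])=\C[t^{\pm1}]$, and then bijectivity forces $\phi(\sqrt{P_1})=f\sqrt{P_2}$ with $f$ a unit monomial directly. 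Both arguments are short; the paper's avoids having to check non-squareness, while yours is more hands-on. (Minor bookkeeping: in Case~1 the roots of $P_1(ct)$ are $a_i/c$, not $ca_i$, though your final relation $a_{\gamma(i)}=cb_i$ comes out correctly.)
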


\begin{proof}   We know that $\phi(\C)=\C$ and that $\phi$ is uniquely
determined by $\phi(t_1)$ and $\phi(\sqrt{P_1(t_1)})$. We deduce from
\lemref{oddposdegreet}  that $\phi(t_1)=c ^2t_2^{\pm 1}$ as $\phi$ must
map the generator $t_1$ of $R_2({P_1})^*/\C^*$ to a generator of
$R_2({P_2})^*/\C^*$.  Thus $\phi$ maps $\mathbb C[t_1,t_1^{-1}]$
bijectively onto $\mathbb C[t_2,t_2^{-1}]$ and hence it maps
$\mathbb C[t_1,t_1^{-1}]\sqrt{P_1(t_1)}$ bijectively onto $\mathbb
C[t_2,t_2^{-1}]\sqrt{P_2(t_2)}$ since $$\mathbb
C[t_{i}^{\pm1}]\sqrt{P_i(t_i)}=\{x\in R_2(P_i)\setminus
\C[t_i^{\pm1}]\,|\, x^2\in \C[t_i^{\pm1}]\},\ i=1,2.$$ So
$\phi(\sqrt{P_1(t_1)})=f\sqrt{P_2(t_2)}$ for some $f\in
\C^*\{t^k\,|\, k\in\Z\}$.

\vskip 5pt \noindent {\bf Case 1}: $\phi(t_1)=c^2t_2$ and
$\phi(\sqrt{P_1(t_1)})=f\sqrt{P_2(t_2)}$ for some  $c\in\C^*$ and
some $f\in \C^*\{t^k\,|\, k\in\Z\}$.

We have
\begin{align*}
\phi(P_1(t_1))&=\phi(t_1(t_1-a_1)\cdots (t_1-a_{2n})) \\
             &=c^2t_2(c^2t_2-a_1)\cdots (c^2t_2-a_{2n}) \\
             &=c^{4n+2}t_2(t_2-(a_1/c^2))\cdots (t_2-(a_{2n}/c^2)) \\
             &=f^2P_2(t_2) \\
             &=f^2t_2(t_2-b_1)\cdots (t_2 -b_{2m}).
\end{align*}
Now since the $a_i/c^2\neq 0$, $i=1,\dots, 2n$ are distinct, one must
have that $f$ is constant such that $f^2=c^{4n+2}$, $m=n$ and
$b_i=a_{\gamma(i)}/c^2, i=1,\cdots, 2n$ for some $\gamma\in S_{2n}$.

Conversely, given any $c\in \C^*$ and $\gamma\in S_{2n}$ with
$b_i=a_{\gamma(i)}/c^2$ for all $i=1\cdots,2n$, we define an
endomorphism $\phi:\ R_2(P_1) \to R_2(P_2)$ by $\phi(t_1)=c^2t_2$ and
$\phi(\sqrt{P_1(t_1)})=\pm\sqrt{c^{4n+2}P_2(t_2)}$. Then
$\phi(P_1(t_1))=c^{4n+2}P_2(t_2)$ and hence $\phi$ is an
isomorphism.

\vskip 5pt \noindent {\bf Case 2}: $\phi(t_1)=c^2t_2^{-1}$ and
$\phi(\sqrt{P_1(t_1)})=f\sqrt{P_2(t_2)}$ for some  $c\in\C^*$ and
some $f\in \C^*\{t^k\,|\, k\in\Z\}$.

One has
\begin{align*}
\phi(P_1(t_1))&=\phi(t_1(t_1-a_1)\cdots (t_1-a_{2n})) \\
             &=c^2t_2^{-1}(c^2t_2^{-1}-a_1)\cdots (^2ct_2^{-1}-a_{2n}) \\
             &=c^2a_1\cdots a_{2n}t_2^{-2n-1}(t_2-(c^2/a_1))\cdots (t_2-(c^2/a_{2n})) \\
             &=f^2t_2(t_2-b_1)\cdots (t_2 -b_{2m}).
\end{align*}
Now since all $c^2/a_i\neq0, i=1,\dots, 2n$ are distinct, one must
have that $m=n$, $f^2=c^2a_1\dots a_{2n}t^{-2n-2}$ and
$c^2=a_ib_{\gamma(i)}$ for some $\gamma\in S_{2n}$. 
%
%

Similar to the previous case, given any $c\in \C^*$ and $\gamma\in
S_{2n}$ with $a_ib_{\gamma(i)}=c^2$, we can define an isomorphism
$\phi:\ R_2(P_1) \to R_2(P_2)$ by $\phi(t_1)=c^2 t_2^{-1}$ and
$\phi(\sqrt{P_1(t_1)})=\pm c t^{-n-1}\sqrt{a_1\dots a_{2n}P_2(t_2)}$.
\end{proof}

\begin{corollary} \label{firstred} Suppose the polynomial $P(t)=t(t-a_1)\cdots (t-a_{2n})$ has distinct roots.
There are two possible types of automorphisms  $\phi\in\textrm{Aut}(R_2(P))$:
\begin{enumerate}
\item If $a_{\gamma(i)}=\a^2 a_i$ for some $4n$-th root of unity $\a$ and $\gamma\in
S_{2n}$, then
$$
\phi(t)=\a^2 t,\ \ \phi(\sqrt{P(t)})=\pm \a \sqrt{ P(t)}.
$$
Denote these automorphisms by $\phi_{\a}^{\pm}$ respectively. It
is clear that $\big(\phi_{\a}^{\pm}\big)^{4n}=\id$.
\item  If $a_ia_{\gamma(i)}=c^2$ for some $c\in \C^*$ and $\gamma\in
S_{2n}$ 
then $\phi(t)=c^2 t^{-1}$, and
$$\phi(\sqrt{P(t)})=\pm t^{-n-1}c^{n+1}\sqrt{P(t)}\quad \text{if}\ \prod_{i=1}^{2n}a_i=c^{2n},$$
or
$$\phi(\sqrt{P(t)})=\pm t^{-n-1}c^{n+1}\sqrt{-P(t)}\quad \text{if}\ \prod_{i=1}^{2n}a_i=- c^{2n}.$$
Denote these automorphisms by $\psi_{c}^{\pm}$ respectively in both
cases. It is clear that $\big(\psi_c^+\big)^2=\id$ and $\big(\psi_c^-\big)^2=\varphi_1^-$.
\end{enumerate}

\end{corollary}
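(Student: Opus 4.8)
The plan is to derive Corollary~\ref{firstred} directly from Theorem~\ref{isothm} by specializing to $P_1=P_2=P$, and then to extract the extra constraints that come from the fact that the codomain polynomial is forced to be literally $P$ rather than merely equal to $P$ up to the scaling $c^{2n+1}$ that appeared in the isomorphism statement. First I would apply Theorem~\ref{isothm} with $m=n$ and $(b_1,\dots,b_{2n})=(a_1,\dots,a_{2n})$. In Case~1 of that theorem we get $\phi(t)=ct$ and $\phi(\sqrt P)=\pm c^{n}\sqrt{cP}$, together with $a_{\gamma(i)}=ca_i$ for some $\gamma\in S_{2n}$. The key observation is that since $\gamma$ permutes the finite set $\{a_1,\dots,a_{2n}\}$, iterating $a_{\gamma(i)}=ca_i$ along a cycle of $\gamma$ of length $\ell$ yields $c^{\ell}a_i=a_i$, hence $c^{\ell}=1$; as the $a_i$ are nonzero and $\ell\mid (2n)!$ this already forces $c$ to be a root of unity, and one sharpens this to: $c$ is a $2n$-th root of unity (since the product $\prod a_i$ is fixed by $\gamma$, $c^{2n}\prod a_i=\prod a_i$, giving $c^{2n}=1$ directly). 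Writing $\zeta=c$ we obtain exactly the automorphisms $\phi_\zeta^\pm$ of part~(1), and $(\phi_\zeta^\pm)^{2n}=\id$ follows because $\phi_\zeta^\pm$ acts on $t$ by multiplication by $\zeta$ and on $\sqrt P$ by a scalar whose $2n$-th power is $\zeta^{2n(n+1/2)}\cdot(\text{something})$; one checks the order by composing and using $\zeta^{2n}=1$.

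Next I would treat Case~2 of Theorem~\ref{isothm}: $\phi(t)=ct^{-1}$, $a_ib_{\gamma(i)}=c$, i.e.\ $a_ia_{\gamma(i)}=c$, and $\phi(\sqrt{P_1})=\pm t^{-n-1}\sqrt{c\,a_1\cdots a_{2n}\,P_2}$. Here the only refinement needed is to rewrite $c\,a_1\cdots a_{2n}$ in terms of $c$ alone. Multiplying the relations $a_ia_{\gamma(i)}=c$ over all $i=1,\dots,2n$ gives $\big(\prod_{i=1}^{2n}a_i\big)^2=c^{2n}$, so $\prod_{i=1}^{2n}a_i=\pm c^{n}$. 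In the $+$ case $c\,a_1\cdots a_{2n}=c^{n+1}$ and $\phi(\sqrt P)=\pm t^{-n-1}\sqrt{c^{n+1}P}$; in the $-$ case $c\,a_1\cdots a_{2n}=-c^{n+1}$ and $\phi(\sqrt P)=\pm t^{-n-1}\sqrt{-c^{n+1}P}$. These are precisely the maps $\psi_c^\pm$ of part~(2). The relation $(\psi_c^\pm)^2=\id$ is a short computation: $\psi_c^\pm$ sends $t\mapsto ct^{-1}\mapsto c(ct^{-1})^{-1}=t$, and on $\sqrt P$ the composite scalar works out to $1$ using $t^{-n-1}\mapsto (ct^{-1})^{-n-1}=c^{-n-1}t^{n+1}$ and $P(ct^{-1})=\big(\prod a_i\big)^{2}c\,t^{-2n-1}\cdots$, i.e.\ one just feeds $\phi(\sqrt P)$ through $\phi$ a second time and simplifies.

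The main obstacle, such as it is, is bookkeeping rather than conceptual: one must make sure the sign ambiguities $\pm$ are tracked consistently through the composition $\psi_c^\pm\circ\psi_c^\pm$ and $\phi_\zeta^\pm\circ\phi_\zeta^\pm$ so that the claimed orders ($2n$ and $2$ respectively) are exactly right and not off by a factor of $2$, and one must verify that the radicand appearing under $\sqrt{\ \cdot\ }$ in each case is genuinely a square of an element of $R_2(P)$ times $P$, so that the formulas define honest elements of $R_2(P)$ (this is where $\zeta^{2n}=1$ and $\prod a_i=\pm c^n$ get used). I do not expect any genuine difficulty here; the content is entirely in Theorem~\ref{isothm}, and Corollary~\ref{firstred} is its diagonal specialization together with the elementary root-of-unity argument for $c$.
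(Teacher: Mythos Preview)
Your approach is essentially identical to the paper's: both specialize Theorem~\ref{isothm} to the diagonal case $P_1=P_2=P$ and obtain the constraint $\zeta^{2n}=1$ (respectively $\prod_i a_i=\pm c^n$) by taking the product of the relations $a_{\gamma(i)}=\zeta a_i$ (respectively $a_i a_{\gamma(i)}=c$) over $i=1,\dots,2n$. One small point where the paper is more careful than your sketch: in the subcase $\prod_i a_i=-c^n$ the paper's computation actually yields $(\psi_c^\pm)^2=\phi_1^{-}$ (the involution $t\mapsto t$, $\sqrt P\mapsto -\sqrt P$) rather than $\id$, so your caveat about tracking the $\pm$ signs through the composition is exactly on target---the ``composite scalar works out to $1$'' claim fails there, and the order is off by the factor of $2$ you anticipated.
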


\begin{proof} We continue the proof of the previous theorem for
automorphisms. In the first case, we use $\a^2$ instead of $c$.
Hence we have $\phi(t)=\a^2 t$ for some $\a\in\C^*$ and
$$
\a^{4n}=\frac{\prod_{i=1}^{2n}a_i}{\prod_{i=1}^{2n}a_{\gamma(i)}}=1,\,\,f=\pm
\a^{2n+1}. 
$$
so that $\a$ is a $4n$-th root of unity (not necessarily primitive). So
$\phi^{4n}=\id$.

In the second case, we use $c^2$ instead of $c$. 
We see that  $\prod_{i=1}^{2n}a_ia_{\gamma(i)}=c^{4n}$ and $f^2=c^2a_1\dots
a_{2n}t^{-2n-2}$ so that
$$f=\pm t^{-n-1} { c^{n+1}}\ \text{and}\ \phi^2=\id, \quad \text{if}\ \prod_{i=1}^{2n}a_i=c^{2n}$$
or
$$f=\pm \sqrt{-1}t^{-n-1}{c^{n+1}}\ \text{and}\ \phi^2=\phi_{1}^{-}, \quad \text{if}\ \prod_{i=1}^{2n}a_i=- c^{2n}.$$
Our result follows directly. 
\end{proof}

%

\begin{corollary}\label{automorphismthm} Suppose $P(t)=t(t-a_1)\cdots (t-a_{2n})$ has distinct roots.
\begin{enumerate}
\item If there does not exist $\psi_{c}^{\pm}\in \Aut(\mathcal{R}_2(P))$ for any $c\in\C^*$,
 then  $\Aut(\mathcal{R}_2({P}))$ is generated by $\varphi^+_\a$   for a primitive root of unity of order $2k$ with $k|2n$.
 Consequently, $\Aut(\mathcal{R}_2({P}))\simeq \Z_{2k}$.
 \item If there exists $\psi_{c}^{\pm}\in \Aut(\mathcal{R}_2(P))$ for some $c\in\C^*$,
 then $\Aut(\mathcal{R}_2({P}))$ is generated by  $\varphi_{\a}^{+}$, $\psi_c^{\pm}$  where  $\a$ is a primitive root of unity  of order  $2k$ with  $k|2n$. (The group structure of $\Aut(\mathcal{R}_2({P}))$ will be explained later.)

\end{enumerate}
\end{corollary}

\begin{proof} We first consider (1).
From \Corref{firstred}, we know that there is an automorphism
$\varphi^+_\a$ of maximal order  $2k$ with   $k|2n$. Then
it is easy to see that any other automorphisms coming from Corollary
\ref{firstred} (1) are of the form $\varphi^+_{\a^j},
j=0,1,\cdots,2k-1$. Noticing that
$\varphi^-_\a=\varphi^+_{\a^{k+1}}$, we have
$$ \{\varphi^+_{\a^j}\ |\ i=0,\cdots,2k-1\}= \Aut(\mathcal{R}_2(P)).$$
Thus (1) follows.

%

For (2), we note that if $\sigma_1, \sigma_2$ are automorphisms
coming from \Corref{firstred} (2), then $\sigma_1\sigma_2^{-1}$ is
an automorphism corresponding to \Corref{firstred} (1). Thus
$\Aut(\mathcal{R}_2({P}))$ is generated by     $\varphi_{\a}^{+}$, $\psi_c^{\pm}$  where  $\a$ is a primitive root of unity  of order $2k|2n$.
%
%
%
\end{proof}

\noindent {\bf Acknowledgments.} The research presented in this
paper was carried out during the visit of R.L. and B.C. to Wilfrid
Laurier University supported by University Research Professor  grant in August of 2014. X.G. is partially supported by
NSF of China (Grant 11101380, 11471294) R.L. is partially supported
by NSF of China (Grant 11471233, 11371134) and Jiangsu Government Scholarship for
Overseas Studies (JS-2013-313). K.Z. is partially supported by  NSF
of China (Grant 11271109) and NSERC.

%

\def\cprime{$'$} \def\cprime{$'$} \def\cprime{$'$}
\providecommand{\bysame}{\leavevmode\hbox
to3em{\hrulefill}\thinspace}
\providecommand{\MR}{\relax\ifhmode\unskip\space\fi MR }
\providecommand{\MRhref}[2]{%
  \href{http://www.ams.org/mathscinet-getitem?mr=#1}{#2}
} \providecommand{\href}[2]{#2}

%
%
%
%
%

\end{document}